\documentclass[12pt]{amsart}
\usepackage{amssymb,amsmath,amsthm}
\setlength{\textheight}{8.2in}
\addtolength{\oddsidemargin}{-.5in}
\addtolength{\textwidth}{1in}

\numberwithin{equation}{section}
\begin{document}

\theoremstyle{plain}
\newtheorem{theorem}{Theorem}[section]
\newtheorem{lemma}[theorem]{Lemma}
\newtheorem{proposition}[theorem]{Proposition}
\newtheorem{corollary}[theorem]{Corollary}
\newtheorem{conjecture}[theorem]{Conjecture}

\def\mod#1{{\ifmmode\text{\rm\ (mod~$#1$)}
\else\discretionary{}{}{\hbox{ }}\rm(mod~$#1$)\fi}}

\theoremstyle{definition}
\newtheorem*{definition}{Definition}

\theoremstyle{remark}
\newtheorem*{remark}{Remark}
\newtheorem{example}{Example}[section]
\newtheorem*{remarks}{Remarks}

\newcommand{\cc}{{\mathbb C}}
\newcommand{\qq}{{\mathbb Q}}
\newcommand{\rr}{{\mathbb R}}
\newcommand{\nn}{{\mathbb N}}
\newcommand{\zz}{{\mathbb Z}}
\newcommand{\pp}{{\mathbb P}}
\newcommand{\al}{\alpha}
\newcommand{\be}{\beta}
\newcommand{\ga}{\gamma}
\newcommand{\ze}{\zeta}
\newcommand{\om}{\omega}
\newcommand{\mz}{{\mathcal Z}}
\newcommand{\mi}{{\mathcal I}}
\newcommand{\ep}{\epsilon}
\newcommand{\la}{\lambda}
\newcommand{\de}{\delta}
\newcommand{\De}{\Delta}
\newcommand{\Ga}{\Gamma}
\newcommand{\si}{\sigma}
\newcommand{\Exp}{{\rm Exp}}
\newcommand{\legen}[2]{\genfrac{(}{)}{}{}{#1}{#2}}
\def\End{{\rm End}}

\title{On the sums of two cubes}

\author{Bruce Reznick}
\address{Department of Mathematics, University of 
Illinois at Urbana-Champaign, Urbana, IL 61801} 
\email{reznick@math.uiuc.edu}

\author{Jeremy Rouse}
\address{Department of Mathematics, Wake Forest University,
  Winston-Salem, NC 27109}  
\email{rouseja@wfu.edu}
\thanks{The second author was supported by NSF grant DMS-0901090}
\subjclass[2010]{Primary: 11D25, 11G05; Secondary: 14J27, 14K02}
\begin{abstract}
We solve the equation $f(x,y)^3 + g(x,y)^3 = x^3 + y^3$ for 
homogeneous $f, g \in \mathbb C(x,y)$, completing an  investigation begun
by Vi\`ete in 1591. The usual addition law for elliptic curves and composition
give rise to two binary operations on the set of solutions. We show
that a particular subset of the set of solutions is ring-isomorphic to
$\zz[e^{2 \pi i / 3}]$.
\end{abstract}
\date{\today}
\maketitle
\section{Introduction}
In 1591, Fran\c cois Vi\`ete published a revolutionary work on 
algebra
 which has been translated into English \cite{V} as {\it The Analytic
   Art}.  Vi\`ete's 
``Zetetic XVIII'' \cite[p.145] {V} is: 

\begin{quote}
Given two cubes, to find
  numerically two other 
cubes the sum of which is equal to the difference between those 
that are given.

\medskip

 \noindent
Let the two given cubes be $B^3$ and $D^3$, the first the 
greater, the second the smaller. Two other cubes are to be 
found, the sum of which is equal to $B^3-D^3$. Let $B-A$ be the 
root of the first one that is to be found, and let $B^2A/D^2 - D$ 
be the root of the second. Forming the cubes and comparing them 
with $B^3-D^3$, it will be found that $3D^3B/(B^3+D^3)$ equals 
$A$. The root of the first cube to be found, therefore, is 
$[B(B^3-2D^3)]/(B^3+D^3)$ and of the second is  
$[D(2B^3-D^3)]/(B^3+D^3)$. And the sum of the cubes of these 
is equal to $B^3-D^3$ .... [So it is if] $B$ is 2 and $D$ 1: The 
cube of the root 6 will equal the individual cubes of 3, 4 and 5. 
When, therefore, the cubes of $6x$ and $3x$ are given, the cubes 
of $4x$ and $5x$ will appear and the sum of the latter will be 
equal to the difference between the former.
\end{quote}

Vi\`ete worked at the dawn of algebra, when mathematicians were 
not yet comfortable with negative numbers;  his work can be put 
into somewhat more modern terminology by setting $B = x$ and $D = 
-y$. Vi\`ete's formula then becomes:

\begin{equation}
\label{viete} 
x^3 + y^3 = \left(\frac {x(x^3+2y^3)}{x^3-y^3} 
\right)^3 + \left(\frac
  {y(y^3+2x^3)}{y^3-x^3} \right)^3. 
\end{equation}

Equation (1.1) is well-known in number theory; its iteration 
shows that any sum of two cubes over $\qq$ (except those of the 
form $d^{3}$ and $2d^{3}$) has infinitely many such 
representations. See for example \cite[\S 13.7, 21.11]{HW}. 
Continuing Vi\`ete's example,
\begin{equation}
\label{iter} 
189 = 6^3 + (-3)^3 = 4^3 + 5^3 = \left(-\tfrac{1256}{61} \right)^3 + 
\left(\tfrac {1265}{61}\right)^3 =  \cdots
\end{equation}

In this paper, we find all solutions to
\begin{equation}
\label{E:mainequation}
  f^{3}(x,y) + g^{3}(x,y) = x^{3} + y^{3},
\end{equation}
where $f(x,y)$ and $g(x,y)$ are  homogeneous rational 
functions over $\cc$. Upon finding a common denominator for $(f,g)$, the
equation in \eqref{E:mainequation} becomes
\begin{equation}
\label{titleequation}
 p^3(x,y) + q^3(x,y) = (x^3+y^3)r^3(x,y), 
\end{equation} 
where $p,q,r \in \cc[x,y]$ are homogeneous polynomials ({\it forms}),
$f = p/r$ and $g = q/r$. The degree of the solution is defined to be
$\deg(p) = \deg(q) = 1 + \deg(r)$.

In projective terms,
\begin{equation}\label{homog}
(f : g : 1) = (p : q : r).
\end{equation}
Our principal definition is the following: let
\begin{equation}\label{E:def}
{\mathcal V}= \{v = (p : q : r) : \text{where $p,q,r \in \cc[x,y]$ are
  forms and satisfy } \eqref{titleequation}\}.
\end{equation}
A solution to \eqref{E:def} with $r \ne 0$ is projectively equivalent
to $(p/r : q/r : 1)$ and we will denote solutions of this type by
$(p/r,q/r)$ or $(f,g)$. However, there are three solutions to 
\eqref{titleequation} ``at
infinity'' with $r = 0$, namely $(1 : -1 : 0)$, $(1 : -\omega : 0)$,
and $(1 : -\omega^{2} : 0)$, where
\begin{equation}
\om := \Exp\left(\tfrac{2\pi i}3\right) = -\tfrac 12 + \tfrac{\sqrt 3}2 i.
\end{equation}

We observe that if $\pi$ is irreducible and $\pi | p,q$ in
\eqref{titleequation}, then $\pi^3 | (x^3+y^3)r^3$, hence, 
$\pi^2 | r^3$ (at least), so $\pi | r$. Similarly, if $\pi | p,r$,
then $\pi | q$ 
and if $\pi | q,r$, then $\pi | p$. Since $r$ is a common denominator,
no two of $\{p,q,r\}$ have a common factor. 

One would ordinarily say that, if $x^3 + y^3 = f_1^3 
+ g_1^3 = f_2^3 + g_2^3$ and $\{f_1^3,g_1^3\} =  
\{f_2^3,g_2^3\}$, then $(f_1,g_1)$ and $(f_2,g_2)$ are the same 
solution;  however as ``points'' on \eqref{E:mainequation}, each  
solution occurs 18-fold : as $(\om^j f, \om^k g)$ and $(\om^k g, \om^j 
f)$, where $j,k \in \{0,1,2\}$. We shall call these elements of
$\mathcal V$ the {\it affiliates} of $(f,g)$.

We now list $v = (p : q : r) \in \mathcal V$ (up to affiliation) with degree 
$\le 12$, with the convention that subscripts given below to $p,q,r$ 
will be inherited by $f = p/r, g = q/r$, and $v$. 
(That these are the only such elements will follow from Theorem~\ref{mainthm}.)

Let
\begin{equation}
\ze := \ze_{12} = \Exp\left(\tfrac {\pi i}6\right) = \tfrac{\sqrt 3} 2
+ \tfrac i2, 
\end{equation}
so that $\ze + \ze^{-1} = \sqrt 3$ and $\ze^3 + \ze^{-3}=0$. 
We note that there are two solutions of degree $7$, the second of
which is the complex conjugate of $v_7$ in the table below. 

\begin{tabular}{c|lll}
Degree & Solution\\
\hline
$1$ & $p_{1} = x$\\ 
    & $q_{1} = y$\\ 
    & $r_{1} = 1$\\
$3$ & $p_{3} = \zeta^{-1} x^{3} + \zeta y^{3}$\\ 
    & $q_{3} = \zeta x^{3} + \zeta^{-1} y^{3}$\\
    & $r_{3} = \sqrt{3} xy$\\
$4$ & $p_{4} = x(x^{3} + 2y^{3})$\\ 
    & $q_{4} = -y(y^{3} + 2x^{3})$\\ 
    & $r_{4} = x^{3} - y^{3}$\\
$7$ & $p_{7} = x \left(x^{6} + (1 + 3 \omega) (x^{3} y^{3} + y^{6})\right)$\\ 
    & $q_{7} = y\left( (1 + 3 \omega)(x^{6} + x^{3} y^{3}) + y^{6}\right)$\\ 
    & $r_{7} = x^{6} + (1 - 3 \omega) x^{3} y^{3} + y^{6}$\\
$9$ & $p_{9} = -x^{9} + 3x^{6} y^{3} + 6x^{3} y^{6} + y^{9}$\\ 
    & $q_{9} = x^{9} + 6x^{6} y^{3} + 3x^{3} y^{6} - y^{9}$\\
    & $r_{9} = 3xy (x^{6} + x^{3} y^{3} + y^{6})$\\
$12$ & $p_{12} = -3(x^{3} - y^{3})^{3} (x^{3} + y^{3})
- (1 + 2 \omega)(x^{3} (x^{3} + 2y^{3})^{3} + y^{3} (y^{3} + 2x^{3})^{3})$\\
    & $q_{12} =-3(x^{3} - y^{3})^{3} (x^{3} + y^{3})
- (1 + 2 \omega^2)(x^{3} (x^{3} + 2y^{3})^{3} + y^{3} (y^{3} + 2x^{3})^{3})$\\ 
    & $r_{12} = 6xy (x^{3} - y^{3})(2x^{3} + y^{3}) (x^{3} + 2y^{3})$\\
\end{tabular}

Observe that
\begin{equation}
\frac{\ze}{\sqrt {3}} = \frac{2+\om}3, \qquad 
\frac{\ze^{-1}}{\sqrt {3}} = \frac{2+\om^2}3
\end{equation}
so that $f_3,g_3 \in \qq[\om](x,y)$; see
Theorem \ref{conseqthm}(3) below. By setting $x = 6$ and $y =
-3$ in $v_{9}$, we obtain the rational solution $189 =
\left(\frac{219}{38}\right)^{3} 
+ \left(\frac{-51}{38}\right)^{3}$ to \eqref{iter}.

We remark that $v_4$ appears in \cite[pp.550-1]{Di}. In 1877, Lucas
(see \cite[p.574]{Di}) proved a formula equivalent to $v_9$; namely,
if $x^3 + y^3 = A z^3$, then $q_9^3(x,y) + p_9^3(x,y) =
A z^3r_9^3(x,y)$. In 1878, Lucas (see \cite[p.575]{Di}) gave the
identity
\begin{equation}\label{E:lucas}
\begin{gathered}
(-x^3+3x^2y+6xy^2+ y^3)^3 + (x^3+6x^2y + 3xy^2 - y^3)^3 =\\
3^3xy(x+y)(x^2+xy+y^2)^3, 
\end{gathered}
\end{equation}
which Desboves later observed (see \cite[p.575]{Di}) is equivalent to Lucas'
previous identity upon taking $(x,y) \mapsto (x^3,y^3)$. 

Even though $v_3$ is not  real,
its components become real under any map $(x,y) \mapsto (\al x + \be
y, \bar \al x+ \bar \be y)$.  
This map is invertible provided $\al\bar \be$ is not real.  Taking  $(\al,\be) =
(\ze^{-1},\ze^{1})$ in the expression $f_3^3(x,y) + g_3^3(x,y) =
(x^3+y^3)r_3^3(x,y)$ yields \eqref{E:lucas}. Thus, it can be argued
that the first ``new'' solution to \eqref{E:mainequation} in the table
is $v_7$.  The previous interest in \eqref{E:mainequation} required
solutions over $\mathbb Q$. We shall show in Theorem
\ref{conseqthm}(7) that rational solutions
 only occur for square degree. Since the
solution of degree 16 arises from iterating $v_4$, our first truly new
solution over $\mathbb Q$ has degree 25.

The set $\mathcal V$ is invariant under a large number of symmetries,
and the examples given above show that $v \in \mathcal V$ itself may  be
symmetrical.  For example, if 
$v(x,y) \in \mathcal V$, then $v(y,x)$, $\overline{v(x,y)}$ (the 
complex conjugate of $v(x,y)$), $v(x, \om y)$ and all 
combinations thereof are also in  $\mathcal V$. Further, if $v 
= (f,g), v'=(f',g') \in \mathcal V$, then there is a 
natural composition, implicit 
already in the iterations of \eqref{iter}. To be specific,  we 
define $w = (h,k) = v\circ v'$, by
\begin{equation}\label{E:circ}
h(x,y) = f(f'(x,y),g'(x,y)), \quad k(x,y) = g(f'(x,y),g'(x,y))
\end{equation}
It follows from
\begin{equation}
\begin{gathered}
h^3(x,y) + k^3(x,y) =  f^3(f'(x,y),g'(x,y)) +
g^3(f'(x,y),g'(x,y))\\
 = (f'(x,y))^3 + (g'(x,y))^3 = x^3 + y^3
\end{gathered}
\end{equation}
that $v \circ v' \in \mathcal V$ as well. The connections among $v_3$,
\eqref{E:lucas} and $v_9$ are equivalent to the equation  $v_3 \circ
v_3 = v_9$; it turns out that  $v_3 \circ v_4 = v_4 \circ v_3 =
v_{12}$.  The homogeneous version of composition
(which applies to infinite solutions as well) is given as follows.
If $v = (p : q : r)$ and $v' = (p' : q' : r')$, then
\begin{equation}\label{E:circforms}
\begin{gathered}
  v \circ v' = 
  (p(p'(x,y),q'(x,y)) : q(p'(x,y), q'(x,y)) : r'(x,y) r(p'(x,y), q'(x,y))).
\end{gathered}
\end{equation}
Thus,  $d(v\circ v') = d(v)d(v')$, unless there are common
factors in \eqref{E:circforms}. It can be shown directly that this
cannot happen, but it will also follow from our main work.

The first principal result of this paper is the following theorem.
\begin{theorem}
\label{mainthm} Under the usual addition on elliptic curves (see 
Section~\ref{pointadd} for more details), $\mathcal V$ is an 
abelian group isomorphic to $\zz + \zz + \zz/3\zz$. The 
generators of infinite order may be taken to be $h_1 = (x,y)$ and 
$h_2 = (\om x, \om y)$; the element of order 3 is $h_0 = (1: 
-\om:0)$, a solution of \eqref{titleequation} ``at infinity''.  
Further, $d(m h_1 + n h_2 + t h_0) = m^2 - mn + n^2$.

Moreover, the subgroup $\mathcal V_1 = \{ m h_1 + n h_2\}$ is
ring-isomorphic to $\zz[\om]$ under the identification $R(m
h_1 + n h_2) = m + n \om$, with addition on curves and
composition in $\mathcal V_1$ corresponding to addition and
multiplication in $\zz[\om]$.
\end{theorem}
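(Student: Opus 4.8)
The plan is to realize $\mathcal V$ as a set of explicit isogenies (or, projectively, degree-$d$ self-maps) of the elliptic curve $E : X^3 + Y^3 = Z^3$ and then transport the whole problem to the endomorphism ring of $E$. Concretely, a solution $v = (p:q:r) \in \mathcal V$ of \eqref{titleequation} of degree $d$ is exactly a morphism $\phi_v : \pp^1 \to E$, $(x:y) \mapsto (p(x,y) : q(x,y) : r(x,y))$ whose image lies on $X^3+Y^3=Z^3$; composing with a fixed parametrization $\psi : E \xrightarrow{\sim} \pp^1$ (coming from the fact that $E$ has genus $1$ but we are mapping \emph{from} $\pp^1$, so really we first need to see why such maps exist at all) would be backwards. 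So instead I would run the argument the other way: fix the base solution $h_1 = (x:y:1)$, which corresponds to the identity map $\pp^1 \to E \subset \pp^2$ under the standard uniformization $(x:y:1) \leftrightarrow$ a point of $E$. The key structural fact, established in the sections preceding Theorem~\ref{mainthm}, is that $\mathcal V$ with the elliptic-curve addition is isomorphic to $\zz + \zz + \zz/3\zz$ with $d(mh_1 + nh_2 + th_0) = m^2 - mn + n^2$. I would take all of that as given, and the remaining task for the final statement is purely to identify the \emph{composition} operation $\circ$ on the subgroup $\mathcal V_1 = \{mh_1 + nh_2\}$ with multiplication in $\zz[\om]$ under $R(mh_1+nh_2) = m+n\om$.

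First I would check that $R$ is a well-defined bijection $\mathcal V_1 \to \zz[\om]$ respecting addition: this is immediate since $\mathcal V_1 \cong \zz^2$ via $(m,n)$, $\zz[\om] \cong \zz^2$ via $m + n\om$, and the elliptic addition on $\mathcal V$ is, by Theorem~\ref{mainthm}, coordinatewise addition of the $(m,n)$ coefficients. Next I would verify that $\circ$ restricts to a binary operation on $\mathcal V_1$ — equivalently that composing two finite solutions never produces (or picks up) the infinite generator $h_0$. This follows because $d(v \circ v') = d(v) d(v')$ (the no-common-factor claim, which the excerpt says will follow from the main work), so $\circ$ is degree-multiplicative, and the degree form $m^2 - mn + n^2$ is exactly the norm form $N(m+n\om) = |m + n\om|^2$ on $\zz[\om]$; any $h_0$-component is invisible to the degree, so I would need the slightly finer statement that $\circ$ on $\mathcal V$ is compatible with the group law — specifically that $v \circ (-)$ is a group endomorphism of $\mathcal V$ for each fixed $v \in \mathcal V_1$. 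This compatibility is the crux: composing on the left with $\phi_v$ corresponds, on the elliptic curve side, to post-composition with the endomorphism $\End(E) \ni \beta_v$ attached to $v$, and post-composition with a fixed isogeny is additive by the very definition of the group law (it is a group homomorphism). Since $\End(E) \cong \zz[\om]$ for this CM curve, $v \mapsto \beta_v$ lands in $\zz[\om]$, and multiplicativity of $\circ$ translates to multiplicativity of $\beta$.

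So the key steps, in order, are: (1) recall the bijection $R$ and that it is an additive isomorphism, citing Theorem~\ref{mainthm}; (2) show $\mathcal V_1$ is closed under $\circ$, using $d(v\circ v') = d(v)d(v')$ together with the classification of degrees; (3) interpret $v \in \mathcal V$ as an isogeny-type map, identify $\circ$ with composition of such maps, and observe that for $v, v' \in \mathcal V_1$ the map $v \circ v'$ corresponds to the product in $\End(E) \cong \zz[\om]$; (4) pin down the normalization — i.e., check on the generators that $v_3 \circ v_3 = v_9$ and $v_3 \circ v_4 = v_4 \circ v_3 = v_{12}$ as asserted, and that $h_1$ is the multiplicative identity, so that $R(v \circ v') = R(v) R(v')$ with the correct sign conventions (no stray unit or complex conjugate). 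Step (4) also checks commutativity of $\circ$ on $\mathcal V_1$, which is not obvious a priori from \eqref{E:circ} but is forced once we are inside the commutative ring $\zz[\om]$.

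The main obstacle I anticipate is step (3): making rigorous the passage from "$v$ is a solution of \eqref{titleequation}" to "$v$ is (essentially) an element of $\End(E)$", and in particular verifying that composition of solutions as defined in \eqref{E:circforms} really does match multiplication in $\End(E)$ rather than, say, multiplication composed with an automorphism. Concretely one must fix once and for all the uniformizing map realizing $h_1$ as $\mathrm{id}$, track what $h_2 = (\om x : \om y : 1)$ becomes (it should correspond to the automorphism $[\om] \in \End(E)$, which is where the ring structure's $\om$ enters), and confirm that the degree-$d$ solution $v$ corresponds to an endomorphism of degree $d$ — consistent with $d = N(R(v))$. Once the dictionary $\mathcal V_1 \leftrightarrow \zz[\om]$, (addition on $E$) $\leftrightarrow$ (addition), $\circ \leftrightarrow$ (multiplication) is set up correctly on generators, the ring isomorphism follows formally, since a bijection between two rings that is additive and multiplicative and sends $1$ to $1$ is a ring isomorphism. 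I would close by remarking that this also re-proves the no-common-factor claim for $\circ$ on $\mathcal V_1$: composition of isogenies never drops degree.
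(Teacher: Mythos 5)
Your instinct to identify solutions with endomorphisms of the curve is exactly the engine of the paper's proof, but your proposal assumes the hardest part of the theorem rather than proving it. You write that the isomorphism $\mathcal V\cong\zz+\zz+\zz/3\zz$ and the degree formula are ``established in the sections preceding Theorem~\ref{mainthm}'' and that you ``would take all of that as given.'' Nothing before the theorem establishes this: Section~\ref{pointadd} only constructs the subgroup $\mathcal V_0$ generated by $h_0,h_1,h_2$ and explicitly defers both $\mathcal V_0=\mathcal V$ and formula \eqref{degrees} to the main theorem. The substance of the theorem is precisely that there are \emph{no} solutions outside $\langle h_0,h_1,h_2\rangle$, and your outline never addresses this. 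The paper's route: to each $P=(a:b:c)\in\mathcal V$ attach the self-map $\phi_P(p:q:r)=(a(p,q):b(p,q):c(p,q)\,r)$ of $E:p^3+q^3=(x^3+y^3)r^3$; prove $\phi_{P+Q}=\phi_P+\phi_Q$ by noting both sides agree at $(x:y:1)$, which has infinite order (this needs a genuine arithmetic input: specialize to $x=2$, $y=1$ and compute that $p^3+q^3=9r^3$ has Mordell--Weil group $\zz$ over $\qq$), so the difference is an isogeny with infinite kernel, hence zero; deduce that $P\mapsto\phi_P$ is a ring isomorphism from $\mathcal V_\infty$ (the solutions fixing the identity at infinity, with $\mathcal V\cong\mathcal V_\infty\times\langle h_0\rangle$) onto $\End(E)$; and then pin down $\End(E)=\zz[\om]$ using the characteristic-zero classification of endomorphism rings together with the visible automorphism $[\om]$. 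The degree formula then comes from dual isogenies, $\deg[m+n\om]=m^2-mn+n^2$. None of these steps appears in your sketch.

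There are secondary problems as well. Your step (2) closes $\mathcal V_1$ under $\circ$ by citing $d(v\circ v')=d(v)d(v')$, but the paper points out that this multiplicativity is a \emph{consequence} of the main theorem, not an available input, so that step is also circular. And your opening picture is off: the image of $(x:y)\mapsto(p:q:r)$ does not lie on $X^3+Y^3=Z^3$, and there is no nonconstant morphism $\pp^1\to E$ for $E$ of genus one; the correct object is the self-map $\phi_v$ above (equivalently, $(x:y:z)\mapsto(p:q:zr)$ on the Fermat cubic). Once that dictionary is in place, your step (3) --- matching $\circ$ with multiplication in $\End(E)\cong\zz[\om]$ and normalizing on the generators $h_1\mapsto[1]$, $h_2\mapsto[\om]$ --- is sound and is essentially how the paper concludes; but as written your proposal proves only the final clause of the theorem, and even that by leaning on the unproved first clause.
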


This result has myriad consequences on the nature of
the solutions $(f(x,y),g(x,y))$; these are collected as our other main
theorem. 

\begin{theorem}
\label{conseqthm}
\ 
\begin{enumerate}
\item If $v \in \mathcal V$, then $(g(x,y))^3 = (f(y,x))^3$.

\item If $v, v' \in \mathcal V$, then $v \circ v'$ and $v' \circ v$
  are affiliates.

\item If $v \in \mathcal V$, then $f, g \in \qq[\om](x,y)$.

\item If $d(v) = 3d'$, then some affiliate of $v$ can be written as
$\tilde v \circ v_3$, where $d(\tilde  v) = d'$. Hence
 there exist forms $P,Q,R$ so that
$$
p(x,y) = P(x^3,y^3), \quad q(x,y) = Q(x^3,y^3), \quad
  r(x,y) = xyR(x^3,y^3)
$$ 

\item If $d(v) = 3d'+1$, then, up to a permutation of $(p,q)$, there
  exist forms $P,Q,R$ so that  
$$
p(x,y)
  = xP(x^3,y^3), \quad q(x,y) = yQ(x^3,y^3), \quad r(x,y) = R(x^3,y^3)
$$

\item In no case is $d(v) \equiv 2 \mod 3$
and no monomial appearing in any $p,q,r$ has an exponent congruent to
2 mod 3. 

\item The real solutions are precisely those of the form
$v = m h_{1}$ for $m \in \zz$ and have $d(v) = m^{2}$. 
The solutions of the form $v = (f, \bar f)$ are precisely those of the form
$mv_{3}$ and have $d(v) = 3m^{2}$.

\item If $f(d)$ denotes the number of solutions with
  degree $d$ (counting each collection of affiliates as one solution), then
\begin{equation}\label{E:refsugg}
  f(d) = \sum_{e | d} \legen{e}{3},
\end{equation}
where $\legen{\cdot}{3}$ denotes the usual Legendre symbol. Thus,
the degree of any solution has the form $m^{2} \prod_{j} p_{j}$ where
the $p_{j}$ are primes $\equiv 0, 1 \pmod{3}$.
\end{enumerate}
\end{theorem}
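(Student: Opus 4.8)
The plan is to derive all eight items as consequences of the structural description in Theorem~\ref{mainthm}, namely that $\mathcal V \cong \zz h_1 \oplus \zz h_2 \oplus (\zz/3\zz) h_0$ with $d(mh_1 + nh_2 + th_0) = m^2 - mn + n^2$, that $\mathcal V_1$ is ring-isomorphic to $\zz[\om]$ with composition as multiplication, and that the swap, conjugation, and scaling maps act as symmetries of $\mathcal V$. First I would pin down how the three ``obvious'' symmetries act on the group: the scaling $v(x,y)\mapsto v(x,\om y)$ fixes $\mathcal V_1$ pointwise up to affiliation (since $(\om^j f,\om^k g)$ is an affiliate) and permutes the copies of $h_0$, so it amounts to translation by $h_0$; the swap $\si:v(x,y)\mapsto v(y,x)$ and conjugation $\kappa:v\mapsto\bar v$ are involutions of $\mathcal V$, and by tracking their effect on the generators $h_1 = (x,y)$, $h_2 = (\om x,\om y)$, $h_0 = (1:-\om:0)$ one identifies them as specific $\zz$-linear (resp. semilinear) automorphisms. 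Concretely $\si$ interchanges the two cube-roots in $p_3,q_3$, giving $\si(h_1) = h_1$, $\si(h_2) = ?$, and $\kappa(h_2) = -\om^2$-type behavior; I would compute these on $v_1, v_3$ directly and extend by the group law. This is the bookkeeping backbone for everything that follows.

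With that in hand the individual items fall out. Item (1): $\si$ sends $(f,g)$ to an affiliate of $(g,f)$ — more precisely, $(f(y,x))^3$ and $(g(x,y))^3$ agree because $\si$ acts on $\mathcal V_1$ in a way that fixes each solution up to the affiliate relations $(\om^j f,\om^k g)$, $(\om^k g,\om^j f)$; one checks the cube kills the $\om^j$ ambiguity. Item (2): since $\mathcal V_1\cong\zz[\om]$ is commutative, $v\circ v'$ and $v'\circ v$ represent the same element of $\mathcal V_1$, hence are affiliates; for the $h_0$-component one notes composition with an infinite solution only shifts by $h_0$ and $3h_0 = 0$. Item (3): $f,g\in\qq[\om](x,y)$ — prove it for the generators ($v_1$ trivially, $v_3$ by the displayed identity $\ze/\sqrt3 = (2+\om)/3$) and for $h_0$, then note the group law and composition on $\mathcal V$ are defined over $\qq(\om)$ (the elliptic curve $x^3+y^3 = $ const and its addition, plus substitution, involve only $\om$), so the whole group lies in $\qq[\om](x,y)$. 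Items (4)–(6) are the congruence statements: every element of $\mathcal V$ is $mh_1 + nh_2 + th_0$, and $d = m^2-mn+n^2$, which is never $\equiv 2\pmod 3$ since $m^2-mn+n^2 \equiv (m-n)^2 + mn \cdots$ — really because $m^2-mn+n^2$ is the norm form of $\zz[\om]$ and norms from $\zz[\om]$ are exactly $0,1\pmod 3$. For the monomial-exponent and factorization-through-cubes claims in (4),(5): when $3\mid d(v)$, write $v$ (up to affiliation and $h_0$) as $\tilde v\circ v_3$ using divisibility of $m+n\om$ by the prime $\sqrt{-3}\sim 1-\om$ in $\zz[\om]$ — $3\mid m^2-mn+n^2$ iff $(1-\om)\mid m+n\om$ — and then the explicit shape of $v_3$ (with $r_3 = \sqrt3\,xy$ and $p_3,q_3$ in $x^3,y^3$) forces $p,q,r$ into the stated form after composition; the case $d\equiv 1$ is analogous using that $v$ itself is then not divisible by $1-\om$, so comparing with the degree-$1$ solution $v_1 = (x,y)$ and using (4) on the complement gives the $xP(x^3,y^3)$ shape. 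Item (6)'s monomial statement then follows since $P(x^3,y^3)$, $xP(x^3,y^3)$, $xyR(x^3,y^3)$ only carry exponents $\equiv 0,1\pmod 3$.

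For item (7): a solution is real iff it is fixed by $\kappa$; having identified $\kappa$ as an automorphism of $\zz h_1\oplus\zz h_2$ (plus action on $h_0$), its fixed subgroup is exactly $\zz h_1$ (one checks $\kappa$ fixes $h_1$ and moves $h_2$ off the real locus, consistent with $v_3$ being non-real), giving $d = m^2$; and $v = (f,\bar f)$ means $\kappa$ composed with the swap fixes $v$, whose fixed set is $\zz v_3$ with $d = 3m^2$ — here I use that $h_2$ or $v_3$ generate the relevant eigen-sublattice. Item (8): the affiliate classes of degree $d$ are in bijection with elements of $\zz[\om]$ of norm $d$ modulo units (the $6$ units times the $3$-fold $h_0$ and the swap account for the $18$-fold affiliation, but affiliates are already quotiented), and the number of such is the classical divisor sum $\sum_{e\mid d}\legen{e}{3}$; this is just the formula for representations by the norm form of $\qq(\om)$, and the final factorization remark restates that this count is nonzero iff every prime $\equiv 2\pmod 3$ divides $d$ to an even power. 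The main obstacle I anticipate is item (4)–(6): correctly extracting the ``factor through $x^3,y^3$'' normal form requires knowing not just that $v$ is divisible by $v_3$ in the group but that the \emph{composition} realizing this divisibility produces no cancellation and lands in the claimed polynomial shape — this needs the explicit monomial structure of $v_3$ together with the no-common-factor observation already noted in the excerpt, and it is where the argument is least formal.
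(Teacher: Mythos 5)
Your overall strategy matches the paper's: everything is reduced to the canonical form $mh_1+nh_2+th_0$, the ring isomorphism $R$ with $\zz[\om]$, and the action of the swap and conjugation symmetries, and your treatments of items (1), (2), (3), (7) and (8) are essentially the arguments the paper gives (for (1) the paper is slightly sharper: $v\circ(-h_1)=-v$ gives the exact identity $(f(y,x),g(y,x))=(g(x,y),f(x,y))$ on $\mathcal V_1$, not merely equality up to affiliates). Item (4) is also handled as you describe: $3\mid m^2-mn+n^2$ forces $(1+2\om)\mid(m+n\om)$, i.e.\ $v=\tilde v\circ v_3$ via Corollary~\ref{inv}, and the explicit substitution $(x,y)\mapsto(\ze^{-1}x^3+\ze y^3,\ze x^3+\ze^{-1}y^3)$ with $r_3=\sqrt3\,xy$ visibly produces the shapes $P(x^3,y^3)$ and $xyR(x^3,y^3)$; your worry about cancellation is disposed of by the multiplicativity $d(v\circ v')=d(v)d(v')$, which follows from the degree formula.

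The genuine gap is item (5). Your plan --- ``comparing with the degree-$1$ solution $v_1$ and using (4) on the complement'' --- has no content: when $d\equiv 1\pmod 3$ the element $R(v)$ is \emph{not} divisible by $1+2\om$, so there is no composition factorization to exploit, and composing with $v_1$ is the identity. The paper's actual argument is of a different character. One first chooses the sign of $v$ so that $m+n\equiv 1\pmod 3$, then computes in the group that $v\circ(\om x,y)=(n-m)h_1-mh_2+2h_0$, which by the affiliate table is exactly $(\om f,g)$. Cross-multiplying $\bigl(p(\om x,y)/r(\om x,y),\,q(\om x,y)/r(\om x,y)\bigr)=\bigl(\om p/r,\,q/r\bigr)$ and using that $p,q,r$ are pairwise coprime shows each of $p,q,r$ is a semi-invariant of the substitution $x\mapsto\om x$ (so all $x$-exponents in each are congruent mod $3$); a short case analysis, again using coprimality of $p$ and $q$ and the ratio $p(\om x,y)/q(\om x,y)=\om\,p(x,y)/q(x,y)$, eliminates the possibilities $x^2y^2P(x^3,y^3)$ etc.\ and pins down $p=xP(x^3,y^3)$, $q=yQ(x^3,y^3)$, $r=R(x^3,y^3)$. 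Without some such eigenvector-plus-coprimality argument, item (5) --- and therefore the monomial-exponent claim in item (6), which depends on it --- is not established by your proposal.
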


Here is the organization of the paper. In Section~\ref{pointadd}, we
review the addition law for points on elliptic curves. This endows
$\mathcal{V}$ with the structure of an abelian group. We then analyze
a subgroup $\mathcal{V}_0$ of $\mathcal V$ and prove that
Theorem~\ref{conseqthm} is true for $\mathcal{V}_{0}$. We define a
ring isomorphism between a
particular subset $\mathcal{V}_{1}$ of $\mathcal{V}$ (under
addition and composition c.f. \eqref{E:circ}) and the ring $\zz[\om]$.
 In
Section~\ref{mwtheory}, we prove that another subset of $\mathcal V$,
$\mathcal{V}_{\infty}$,   is isomorphic to the 
endomorphism ring of the elliptic curve \eqref{titleequation} and use
this to prove that $\mathcal{V}_0 = \mathcal V$ and  $\mathcal{V}_1 =
\mathcal V_{\infty}$.  In
Section~\ref{more}, we discuss the implication of these results for
a few related Diophantine equations.

We remark that many (but not all) of these results can also be derived in
an entirely elementary way. We shall present this approach in
\cite{RR}.

We also happily acknowledge helpful conversations with Bruce Berndt
and Ken Ono, and we wish to thank the anonymous referee for helpful comments
that improved the exposition.

\section{Point addition}
\label{pointadd}

A general reference for this section is \cite{Silverman}. The first part
of the presentation has been heavily influenced by \cite{S}, where addition is
discussed on the curve $X^3 + Y^3 = A$. It is implicit in \cite{S}
that $A \in \mathbb C$, although this is formally unnecessary.

Addition is defined on elliptic curves using a few basic rules. If three points
$P,Q,R$ lie on a line, then  $P+Q+R = 0$. This operation
can be shown to be associative and the set of points on the curve
forms an abelian group; see, e.g. \cite[p.62]{Silverman}.  
This is even true if we look at ``curves'' whose coordinates are rational
functions. 

 We consider the curve
\begin{equation}
C: X^3 + Y^3 = A,
\end{equation}
where for the moment we will be vague about the underlying space for
$(X,Y)$ and the nature of $A\neq 0$. The additive inverse is given by 
\begin{equation}
\label{sumsym}
(X,Y) + (Y,X) = 0,
\end{equation}
where $0$ is the additive identity, to be identified below as a point
at infinity on the curve.
To find the explicit value of the sum, we parameterize the line
through two points on $C$. If 
$(X_1,Y_1), (X_2,Y_2) \in C$, $(X_1,Y_1) \neq (X_2,Y_2)$,
then the condition $\la(X_1,Y_1)
  + (1-\la)(X_2,Y_2) \in C$ implies that
\begin{equation}
\begin{gathered}
(\la X_1 + (1-\la)X_2)^3 +  (\la Y_1 + (1-\la)Y_2)^3 = A \\
 =
(\la^3 + (1-\la)^3)A + 3\la(1-\la)A \implies \\
\la(1-\la)\left(\la (X_1^2X_2 + Y_1^2Y_2) + (1-\la) (X_1X_2^2 +
  Y_1Y_2^2) -A\right) = 0\\ 
\implies \la = 0, \la =1 \text{ or }\la = \frac {A - (X_1X_2^2 +
  Y_1Y_2^2)}{(X_1^2X_2 +  Y_1^2Y_2)- (X_1X_2^2 + Y_1Y_2^2)},   
\end{gathered}
\end{equation}
provided $X_1^2X_2 +  Y_1^2Y_2 \neq X_1X_2^2 + Y_1Y_2^2 $. Ignoring
$\la = 0,1$, and assuming this condition holds, we have  
\begin{equation}
(X_1,Y_1) + (X_2,Y_2) + (W,Z) = 0,  
\end{equation}
where
\begin{equation}
\label{sum1}
\begin{gathered}
W = \frac{A(X_1-X_2) +
  Y_1Y_2(X_2Y_1-X_1Y_2)}{(X_1^2X_2 + 
  Y_1^2Y_2)- (X_1X_2^2 + Y_1Y_2^2)} \\
Z = \frac{A(Y_1-Y_2) +
  X_1X_2(X_1Y_2-X_2Y_1)}{(X_1^2X_2 + 
  Y_1^2Y_2)- (X_1X_2^2 + Y_1Y_2^2)} \\
\end{gathered}
\end{equation}
and so
\begin{equation}
\label{sum2}
(X_1,Y_1) + (X_2,Y_2) = (Z,W),
\end{equation}
again provided that the denominator in \eqref{sum1} is not zero.

If $X_1^2X_2 +  Y_1^2Y_2 = X_1X_2^2 + Y_1Y_2^2 = B$, say, then
$(X_1-X_2)^3 + (Y_1-Y_2)^3 = A - 3B + 3B - A = 0$, hence  $Y_1 
 - Y_2 = -\om^j(X_1-X_2)$ for $j =$ 0, 1 or 2. If $X_1 = X_2$,
 then $Y_1 = Y_2$. Otherwise, $X_1 - X_2 \neq 0$ and
\begin{equation}
\begin{gathered}
0 = X_1^2X_2 +  Y_1^2Y_2 -(X_1X_2^2 + Y_1Y_2^2) = X_1X_2(X_1-X_2) +
Y_1Y_2(Y_1-Y_2)\\ = (X_1-X_2)(X_1X_2 - \om^j Y_1Y_2) \implies X_1X_2 -
\om^jY_1(Y_1 + \om^j(X_1-X_2)) = 0 \\ \implies -\om^j(Y_1 + \om^j X_1)(Y_1
- \om^j X_2) = 0
\end{gathered}
\end{equation}
If $Y_1 = - \om^j X_1$, then $A = X_1^3 + Y_1^3 =0$. 
Otherwise, $Y_1 = \om^j X_2$ and so 
$Y_2 =  \om^j X_1$; thus, $(X_2,Y_2) = (\om^{2j}Y_1, \om^j X_1)$. To
summarize: the three instances in which we cannot add distinct points
according to \eqref{sum1} and \eqref{sum2} are
\begin{equation}
(X_1,Y_1) + (Y_1,X_1), \quad (X_1,Y_1) + 
(\om  Y_1,\om^2 X_1), \quad  (X_1,Y_1) + (\om^2  Y_1,\om X_1).
\end{equation} 
Observe that, if $(X_2,Y_2) = (\om^{2j}Y_1, \om^j X_1)$, then the
numerators of $(W,Z)$ are, for $j=0,1,2$, 
\begin{equation}
\begin{gathered}
A(Y_1 - \om^jX_1)+ \om^{2j}X_1Y_1(\om^j X_1^2 -\om^{2j} Y_1^2) \\
A(X_1 - \om^{2j}Y_1)- \om^jX_1Y_1(\om^j X_1^2 -\om^{2j} Y_1^2),
\end{gathered}
\end{equation}
 respectively, and are in ratio $(1  : - w^{2j})$. In other words,
 \eqref{sum1} fails precisely when the sum would be one of the points
 at infinity. Accordingly, we add them to the definition of $C$ and
 write 
\begin{equation}
\begin{gathered}
(X_1,Y_1) + (Y_1,X_1) + (1:-1:0) = 0, \\
(X_1,Y_1) + (\om Y_1,\om^2 X_1) + (1:-\om^2:0) = 0,
\\ (X_1,Y_1) + (\om^2  Y_1,\om X_1) +(1: -\om:0) = 0.
\end{gathered}
\end{equation}
 Note that, for example, $(X_1:Y_1:1), (Y_1:X_1:1), (1:-1:0)$ all
lie on the projective line $(X_1+Y_1)u_3 = u_1 + u_2$.

In view of \eqref{sumsym}, we see that $(1:-1:0)$ is the additive
identity and  the point $h_0 := (1: -\om:0)$ has order 3. Further, we see
that
\begin{equation}\label{E:infi}
(\om X_1, \om^2 Y_1) = (X_1,Y_1) + h_0, \qquad 
(\om^2 X_1, \om Y_1) = (X_1,Y_1) + 2h_0.
\end{equation}

We still need to define addition when $(X_1,Y_1) = (X_2,Y_2)$. In this
case, we construct the equivalent to the tangent line to the point at
$(X_1,Y_1)$ to make a double point and decree that the third
intersection point will be $-2(X_1,Y_1)$. Formal implicit
differentiation says that the ``slope'' to the curve $X^3 + Y^3 = A$
at $(X_1,Y_1)$ equals $-X_1^2/Y_1^2$, so we seek $\la 
\neq 0$ so that 
\begin{equation}\label{E:vimodern}
\begin{gathered}
(X_1 + \la )^3 + \left(Y_1 - \la \frac{X_1^2}{Y_1^2}\right)^3 
- A = 0. 
\end{gathered}
\end{equation}
Since $X_1^3 + Y_1^3 = A$, 
the left-hand side of \eqref{E:vimodern} is divisible by $\la^2$, so
\begin{equation}\label{E:double}
\begin{gathered}
 \la =
\frac{3X_1Y_1^3}{X_1^3-Y_1^3} \implies -2(X_1,Y_1) =\left( \frac{X_1(A +
  Y_1^3)}{X_1^3-Y_1^3}, \frac{-Y_1(A + X_1^3)}{X_1^3-Y_1^3} \right).
\end{gathered}
\end{equation}

We now set $A = x^3 + y^3 \in \mathbb C[x,y]$ and summarize the foregoing
discussion of  addition. Addition involving points at infinity is
specified by \eqref{sumsym} and \eqref{E:infi}. Otherwise,  
\begin{equation}\label{E:all}
\begin{gathered}
\text{If }  (X_1,Y_1) \neq (X_2,Y_2), \text{ then } 
(X_1,Y_1) + (X_2,Y_2) = (Z,W), \text{ where} \\
Z = \frac{(x^3+y^3)(Y_1-Y_2) +
  X_1X_2(X_1Y_2-X_2Y_1)}{(X_1^2X_2 + 
  Y_1^2Y_2)- (X_1X_2^2 + Y_1Y_2^2)}, \\ W = \frac{(x^3+y^3)(X_1-X_2) +
  Y_1Y_2(X_2Y_1-X_1Y_2)}{(X_1^2X_2 + 
  Y_1^2Y_2)- (X_1X_2^2 + Y_1Y_2^2)}; \\\text{Otherwise, }  2(X_1,Y_1)  =\left(
  \frac{-Y_1(2X_1^3+Y_1^3)}{X_1^3-Y_1^3} , \frac{X_1(X_1^3+  
 2Y_1^3)}{X_1^3-Y_1^3} \right).
\end{gathered}
\end{equation}

Note that $2(X_1,Y_1)= (X_1, Y_1) \circ (-v_4)$, 
and that in the extract cited in the Introduction, Vi\`ete in
\cite{V}, in effect, chooses a  slope for the line 
 to ensure that the cubic equation for $\la$ would have a double root
 at $\la = 0$, rendering its third root easy to find. (This tangent
 line was computed  almost 100 years before calculus was invented!)
Silverman  \cite[p.335]{S} explicitly derived \eqref{E:double}, with
regards to elliptic curves of the form $x^3+y^3 = A$ with $A \in \cc$,
although he did not make the reference to Vi\`ete.  

Finally, since $\om^2 = \la  + (1-\la) \om$ for $\la = -\om$, we observe that 
\begin{equation}\label{E:lines}
\begin{gathered}
(X_1,Y_1) + (X_1, \om Y_1) + (X_1, \om^2 Y_1)  = 0 \\
(X_1,Y_1) + (\om X_1, Y_1) + (\om^2 X_1, Y_1) = 0 \\
(X_1,Y_1) +  (\om X_1, \om Y_1)+ (\om^2 X_1, \om^2 Y_1) = 0.
\end{gathered}
\end{equation}

We now specialize this discussion to $\mathcal V$. First, we write the 
 affiliates of $(f,g) \in
\mathcal V$ in arrays to clarify these sums to zero over lines.
Write  $(f,g) = e_1$ and $(\om f, \om g) = e_2$ for short. Then
all affiliates can be expressed in terms of $e_1, e_2$ and $h_0$:
\small
\begin{equation}
\label{affiliatelist}
\begin{aligned}
(f, \om^2 g) &= e_{2} + 2h_{0} &
(\om f, \om^2 g) &= e_{1} + h_{0} &
(\om^2 f, \om^2 g) &= -e_{1} - e_{2}\\
(f, \om g) &= -e_{1} - e_{2} + h_{0} &
(\om f, \om g) &= e_{2} &
(\om^2 f, \om g) &= e_{1} + 2 h_{0}\\
(f,g) &= e_{1} &
(\om f, g) &= -e_{1} - e_{2} +2 h_{0} &
(\om^2 f, g) &= e_{2} + h_{0} \\ \\ 
(g, \om^2 f) &= -e_{2} + 2h_{0} &
(\om g, \om^2 f) &= -e_{1} + h_{0} &
(\om^2 g, \om^2 f) &= e_{1} + e_{2}\\
(g, \om f) &= e_{1} + e_{2} + h_{0} &
(\om g, \om f) &= -e_{2} &
(\om^2 g, \om f) &= -e_{1} + 2h_{0}\\
(g,f) &= -e_{1} &
(\om g, f) &= e_{1} + e_{2} + 2h_{0} &
(\om^2 g, f) &= -e_{2} + h_{0}.
\end{aligned}
\end{equation}
\normalsize

We now recall $h_0$ and identify two special points on $\mathcal V$:
\begin{equation}\label{E:players}
h_0 = (1: -\om:0), \qquad h_1 = (x,y), \qquad h_2 = (\om x, \om y),
\end{equation}
and let
\begin{equation}
{\mathcal V_0} = 
\{ m h_1 + n h_2 + t h_0\ :\  m, n \in \zz, \ t \in \{0,1,2\} \},
\end{equation}
where $m h_1 + n h_2 + t h_0$ is the {\it  canonical} expression
for $(f,g) \in \mathcal V_0$. (We henceforth reserve $m,n,t$ to the
description above.) We also recall the definition of 
an important subset of $\mathcal V_0$:
\begin{equation}
{\mathcal V_{1}} = 
\{ m h_1 + n h_2\ :\  m, n \in \zz \}.
\end{equation}
For $(0,0) \neq (m,n) \in \zz^2$, let 
\begin{equation}
T(m,n) = \{m h_1 + n h_2, m h_1 + nh_2+ h_0,  m h_1 + nh_2+ 2h_0\}
\end{equation}
denote the {\it $(m,n)$-trio}. 

We now begin to describe the ring-isomorphism between $\mathcal V_1$
and $\zz[\om]$ by analyzing  $v \circ w$. Our first results apply to
 $V_0$ as well. 
\begin{lemma}\label{simplecirc}
If $v = (f,g) \in \mathcal V$, then  
\begin{equation}\label{basiccirc}
h_1 \circ v = v \circ h_1 = v,\quad h_2 \circ v = v \circ h_2 =  \om
v,\quad h_2 \circ h_2 = \om h_2 = \om^2 h_1 = -h_1-h_2.
\end{equation}
\end{lemma}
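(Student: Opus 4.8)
The plan is to verify each identity directly from the defining formula for composition, \eqref{E:circ}, together with the explicit forms $h_1 = (x,y)$ and $h_2 = (\om x, \om y)$. First I would handle $h_1 \circ v = v$: writing $v = (f,g)$, the composition $h_1 \circ v$ has components $x \mapsto f(x,y)$ (the first coordinate of $h_1$ evaluated at $(f,g)$) and $y \mapsto g(x,y)$, which is literally $(f,g) = v$. For $v \circ h_1$, we substitute $h_1 = (x,y)$ into $(f,g)$, i.e.\ we compute $(f(x,y), g(x,y)) = v$ again; here one uses that $f,g$ are homogeneous so the substitution $(x,y)\mapsto(x,y)$ is the identity. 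So $h_1$ is a two-sided identity for $\circ$.

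Next I would treat the two identities involving $h_2$. For $h_2 \circ v$: the first component is the first coordinate of $h_2$, namely $\om x$, evaluated at $(x,y) = (f,g)$, giving $\om f$; similarly the second component is $\om g$. Hence $h_2 \circ v = (\om f, \om g) = \om v$ in the group law --- and here I would invoke \eqref{E:infi}, which says precisely that multiplying both coordinates by $\om$ corresponds to adding $h_0$ in a controlled way, together with the scalar-multiplication interpretation; more directly, $(\om f, \om g)$ is by definition the affiliate we have been denoting $\om v$ (cf.\ \eqref{E:circ} with $v' = h_2$ and the remark that $(\om f, \om g) = \om v$ follows from the affiliate structure). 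For $v \circ h_2$: substituting $h_2 = (\om x, \om y)$ into $(f,g)$ gives $(f(\om x, \om y), g(\om x, \om y))$, and since $f,g$ are homogeneous of the same degree $d$, this equals $\om^d(f(x,y), g(x,y)) = \om^d v$. I then need $\om^d v = \om v$, i.e.\ $d \equiv 1 \pmod 3$; but this is exactly the content of Theorem~\ref{conseqthm}(6) (equivalently the part of Theorem~\ref{mainthm} giving $d = m^2 - mn + n^2$, which is never $\equiv 2 \bmod 3$, combined with the affiliate bookkeeping that identifies $\om^d v$ with $\om v$ whenever $d \equiv 1$, and with $v$ itself when $d\equiv 0$, in which case $f,g \in \cc[x^3,y^3]\cdot\{x,y,1\}$ forces the stronger statement anyway). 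I would phrase this carefully: $v \circ h_2 = \om^{d(v)} v$, and since $d(v) \not\equiv 2 \pmod 3$ we get $\om^{d(v)} \in \{1, \om\}$, matching $\om v$ up to the $h_0$-component, which is the asserted equality in $\mathcal V$.

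Finally, $h_2 \circ h_2$: by the case just done with $v = h_2$, this is $\om h_2$ (using $h_2 \circ v = \om v$ with $v = h_2$), and also equals $h_2 \circ h_2 = (\om(\om x), \om(\om y)) = (\om^2 x, \om^2 y)$ directly from \eqref{E:circ}. By \eqref{E:infi}, $(\om^2 x, \om^2 y) = h_1 + 2h_0$... no: more to the point, reading off the canonical expressions, $(\om^2 x, \om^2 y) = \om^2 h_1$, and from $\om^2 = -1-\om$ in $\zz[\om]$ (anticipating the ring isomorphism, or just from $1 + \om + \om^2 = 0$) together with the relation $h_1 + h_2 + (h_2\circ h_2) = 0$ that comes from the collinearity \eqref{E:lines} applied to $(X_1,Y_1) = h_1$, we obtain $h_2 \circ h_2 = -h_1 - h_2$.

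The main obstacle is the bookkeeping in the $h_2$ identities: one must be scrupulous that ``$\om v$'' denotes the specific affiliate $(\om f, \om g)$, that $\om^{d} v$ really does reduce to this because $d \not\equiv 2 \pmod 3$, and that the group-theoretic relation $\om^2 h_1 = -h_1 - h_2$ is consistent with the line relations \eqref{E:lines}. Everything else is immediate from the definitions.
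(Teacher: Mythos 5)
Your handling of $h_1 \circ v$, $v \circ h_1$, $h_2 \circ v$, and the closing chain $h_2\circ h_2 = \om h_2 = \om^2 h_1 = -h_1-h_2$ (via \eqref{E:lines}) is correct and is essentially what the paper does. The gap is in your computation of $v \circ h_2$. You substitute $(\om x,\om y)$ into $(f,g)$ and extract a factor $\om^{d}$, where $d$ is ``the degree'' of $v$; but the degree of a solution is defined as $\deg p = 1+\deg r$, whereas $f=p/r$ and $g=q/r$ are homogeneous \emph{rational} functions of degree $\deg p - \deg r = 1$ for \emph{every} $v\in\mathcal V$. Hence $f(\om x,\om y)=\om f(x,y)$ and $g(\om x,\om y)=\om g(x,y)$ exactly, so $v\circ h_2=(\om f,\om g)=h_2\circ v$ with no case analysis whatsoever. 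That one-line observation is the entire content of the paper's proof of the second identity.

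Your detour is not merely unnecessary; it is both wrong and circular. If the $\om^{d}$ computation were right, then for $d\equiv 0\pmod 3$ (e.g.\ $v_3$, of degree $3$) you would conclude $v\circ h_2=v$ rather than $\om v$; a direct check on $f_3=(\ze^{-1}x^3+\ze y^3)/(\sqrt 3\,xy)$ gives $f_3(\om x,\om y)=\om f_3(x,y)$, refuting that. The attempted patch --- that $\om^{d}v$ agrees with $\om v$ ``up to the $h_0$-component'' --- is false: by \eqref{affiliatelist}, $(f,g)=e_1$ and $(\om f,\om g)=e_2$ differ by $e_2-e_1$, which for $v\neq 0$ is never a multiple of $h_0$. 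Finally, appealing to Theorem~\ref{conseqthm}(6) at this point is circular: that assertion is deduced from Theorem~\ref{rotate}, Theorem~\ref{compthm} and Corollary~\ref{inv}, all of which rest on the present lemma.
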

\begin{proof}
The first identity is immediate from the definition of
composition. For the second one, note that
  $f$ and $g$ are homogeneous of degree 1, hence $f(\om x, \om y) =
  \om f(x,y)$ and $g(\om x, \om y) = \om g(x,y)$, so 
\begin{equation}
\begin{gathered}
v \circ h_2 =  (f(\om x, \om y),g(\om x, \om y)) = (\om f(x,y),\om g(x,y))
= h_2 \circ v.
\end{gathered}
\end{equation}
The final equation follows from the second and \eqref{E:lines}.
\end{proof}

We now make a simple, but consequential observation about left-distributivity.
\begin{lemma}\label{distrib}
If $v, v', w \in \mathcal V$ and $\tilde v = v + v'$, then $\tilde v
\circ w = v \circ w + v' \circ w$. Thus, $(m v + n v') \circ w = m v \circ w +
n v' \circ w$.  
\end{lemma}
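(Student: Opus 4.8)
The plan is to exploit the fact that the map $u \mapsto u \circ w$ is the composition-on-the-right operation, which on coordinates amounts to substituting the fixed pair $w = (a,b)$ (with $a,b$ homogeneous rational functions, or forms, in $x,y$) into the first argument. Write $v = (f,g)$ and $v' = (f',g')$; by the definition \eqref{E:all} of point addition, the pair $\tilde v = v + v'$ is obtained from $(f,g)$ and $(f',g')$ by a fixed rational formula in the coordinate functions of $v$ and $v'$ and in $A = x^3+y^3$. The key point is that this formula is \emph{natural in the ground ring}: if $\phi$ denotes the substitution $(x,y) \mapsto (a(x,y), b(x,y))$, then $\phi$ is a ring homomorphism $\cc(x,y) \to \cc(x,y)$ (or $\cc[x,y]$-algebra substitution in the homogeneous setting) that fixes nothing in particular except that it sends $x^3+y^3$ to $a^3+b^3$. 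Since $w \in \mathcal V$ we have precisely $a^3 + b^3 = x^3 + y^3$, so $\phi(A) = A$. Therefore applying $\phi$ to every coordinate in the addition formula \eqref{E:all} commutes with the formula itself.

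Concretely, the steps would be: (1) recall that $v \circ w$ has coordinates $(f(a,b), g(a,b)) = (\phi(f), \phi(g))$, and likewise for $v' \circ w$ and $\tilde v \circ w$; (2) observe that the addition law \eqref{E:all} expresses the coordinates $(Z,W)$ of $v+v'$ as rational functions $\Phi_1, \Phi_2$ of the six quantities $X_1, Y_1, X_2, Y_2$ and $A$, with $X_i, Y_i$ being the coordinate functions of $v, v'$; (3) apply the ring homomorphism $\phi$ to the identities $Z = \Phi_1(X_1,Y_1,X_2,Y_2,A)$, $W = \Phi_2(\dots)$, using that $\phi$ commutes with sums, products and quotients of rational functions and that $\phi(A) = \phi(x^3+y^3) = a^3+b^3 = x^3+y^3 = A$; (4) conclude that the coordinates of $\tilde v \circ w$, namely $\phi(Z)$ and $\phi(W)$, are exactly $\Phi_1$ and $\Phi_2$ evaluated at $\phi(X_1), \phi(Y_1), \phi(X_2), \phi(Y_2), A$, which is the definition of $(v\circ w) + (v' \circ w)$. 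Hence $\tilde v \circ w = v \circ w + v' \circ w$. The cases involving the doubling formula (when $v = v'$) and the points at infinity (governed by \eqref{sumsym} and \eqref{E:infi}) are handled identically, since those formulas are again rational in the coordinates and in $A$, and $\phi$ fixes $\om$ and $A$. The second assertion, $(mv + nv')\circ w = m(v\circ w) + n(v'\circ w)$, then follows by a routine induction on $|m|$ and $|n|$ from the additive case together with $(-v)\circ w = -(v\circ w)$ — the latter because \eqref{sumsym} says negation is the coordinate swap $(X,Y)\mapsto(Y,X)$, which visibly commutes with $\phi$.

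The one genuine subtlety — and the step I expect to need the most care — is degenerate specialization: the addition formula \eqref{E:all} is only valid when the denominator $(X_1^2 X_2 + Y_1^2 Y_2) - (X_1 X_2^2 + Y_1 Y_2^2)$ is nonzero \emph{as a rational function}, and one must be sure that substituting $w$ does not create a spurious cancellation or a $0/0$. But here $X_1, Y_1, X_2, Y_2$ are themselves rational functions of $x,y$ (the coordinates of $v$ and $v'$), and we are composing with $w$, i.e. reparametrizing the variables by $x \mapsto a(x,y)$, $y \mapsto b(x,y)$; since $w$ is a nonzero solution, this substitution is a dominant (in fact, generically finite) map, so a rational function that is not identically zero cannot pull back to zero. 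Thus the generic formula is preserved, and the finitely many special configurations (negatives, points at infinity) are precisely the ones already covered by the separate formulas, which are themselves $\phi$-equivariant. I would phrase the argument at the level of "point addition is given by everywhere-locally-rational, $A$-preserving formulas, and $v \mapsto v\circ w$ is induced by an $A$-preserving substitution, hence a group endomorphism," so that the bookkeeping of cases collapses into a single observation.

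Finally, I would remark that this lemma says exactly that right-composition by any fixed $w \in \mathcal V$ is a group endomorphism of $\mathcal V$ — which is what makes the eventual ring structure on $\mathcal V_1$ plausible — and that left-distributivity (rather than the full bilinearity) is all that the elementary substitution argument gives for free; right-distributivity, $v \circ (v' + v'')  = v\circ v' + v \circ v''$, is a different and harder statement not claimed here.
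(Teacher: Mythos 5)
Your argument is correct and is, in expanded form, exactly the paper's proof: composition with $w$ is the formal substitution $(x,y)\mapsto(f(x,y),g(x,y))$, which preserves $A=x^3+y^3$ because $w\in\mathcal V$, and hence commutes with each of the rational addition formulas in \eqref{E:all}, \eqref{sumsym}, and \eqref{E:infi}; the second assertion then follows by induction. The only difference is that you make explicit the degenerate-denominator and point-at-infinity bookkeeping that the paper's two-sentence proof leaves implicit (and which the paper defers elsewhere, in its remark after Corollary~\ref{rv}, to the later isogeny machinery).
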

\begin{proof}
Suppose $w = (f(x,y),g(x,y))$. Composition with $w$ amounts to 
the formal substitution $(x,y) \to (f(x,y),g(x,y))$; \eqref{E:circ} 
shows that substitution this is preserved by  the
varying definitions of addition, establishing the first assertion. The second
assertion follows from the first by induction. 
\end{proof}

\begin{theorem}\label{rotate}
If $v = (f,g) = mh_1 + nh_2 + th_0 \in \mathcal V_0$,  then 
\begin{equation}\label{E:rot}
(\om f, \om g) = -n h_1 +(m-n) h_2 + t h_0,\quad  (\om^2 f, \om^2 g) = (n-m)
h_1 - m h_2 + t h_0. 
\end{equation}
\end{theorem}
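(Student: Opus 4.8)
The plan is to reduce the claim to the already-established Lemmas~\ref{simplecirc} and~\ref{distrib} together with the action of $h_2$ by composition. Observe that for $v = (f,g)$, the affiliate $(\om f, \om g)$ is precisely $v \circ h_2$: indeed $v \circ h_2 = (f(\om x, \om y), g(\om x, \om y)) = (\om f, \om g)$ by the degree-$1$ homogeneity argument from Lemma~\ref{simplecirc}. So the first identity in \eqref{E:rot} is the assertion that
\begin{equation}
(mh_1 + nh_2 + th_0) \circ h_2 = -nh_1 + (m-n)h_2 + th_0,
\end{equation}
and the second identity is obtained by composing once more with $h_2$ (or directly, since $(\om^2 f, \om^2 g) = v \circ h_2 \circ h_2$).

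First I would expand the left-hand side using left-distributivity. By Lemma~\ref{distrib}, $(mh_1 + nh_2 + th_0)\circ h_2 = m(h_1\circ h_2) + n(h_2 \circ h_2) + t(h_0 \circ h_2)$. Now Lemma~\ref{simplecirc} gives $h_1 \circ h_2 = h_2$ and $h_2 \circ h_2 = -h_1 - h_2$. The one remaining ingredient is $h_0 \circ h_2$: since $h_0 = (1:-\om:0)$ is the infinite solution $(\om^2 Y_1, \om X_1)$-type point and composition with $h_2$ substitutes $(x,y)\mapsto(\om x,\om y)$, which scales both coordinates of $h_0$ by $\om$ and hence fixes the projective point, we get $h_0 \circ h_2 = h_0$. (Alternatively one notes $h_0$ has order $3$ and the map $v \mapsto v\circ h_2 = \om v$ must send the unique subgroup of order $3$ to itself, and since $\om h_0$ is again a point of order $3$ lying over the same geometric locus, $h_0 \circ h_2 = h_0$.) Assembling: $m h_2 + n(-h_1 - h_2) + t h_0 = -nh_1 + (m-n)h_2 + th_0$, which is exactly the first formula.

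For the second formula, apply the first one to $(\om f,\om g)$ in place of $(f,g)$: starting from the canonical expression $-nh_1 + (m-n)h_2 + th_0$, replace $(m,n)$ by $(-n, m-n)$ to obtain $-(m-n)h_1 + (-n-(m-n))h_2 + th_0 = (n-m)h_1 - mh_2 + th_0$, as claimed.

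The only genuine point requiring care is the identification $h_0 \circ h_2 = h_0$ and, more subtly, that the ``canonical expression'' is well-defined so that equating two elements of $\mathcal V_0$ forces equality of the coefficient triples $(m,n,t)$ with $t$ taken mod $3$ — this is where one uses that $h_0$ has order exactly $3$ and that $h_1, h_2$ generate a free part, i.e. the internal consistency of $\mathcal V_0$ as described before the theorem. I do not expect this to be an obstacle, since it is built into the setup; the computation itself is a two-line application of bilinearity of $\circ$ over $+$ on the left. If one wanted to avoid any appeal to the (not-yet-proved) group structure of all of $\mathcal V$, one can instead verify both identities in \eqref{E:rot} by direct substitution on representatives, but the distributivity argument is cleaner and is the route I would take.
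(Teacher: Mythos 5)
Your proof is correct and follows essentially the same route as the paper's: both expand $v\circ h_2$ by left-distributivity (Lemma~\ref{distrib}), evaluate $h_1\circ h_2$, $h_2\circ h_2$ via Lemma~\ref{simplecirc}, and use $h_0\circ h_2=h_0$. The only cosmetic difference is in the second identity, which you obtain by iterating the first formula while the paper cites \eqref{E:lines}; these are interchangeable one-line finishes.
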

\begin{proof}
Lemmas~\ref{simplecirc} and \ref{distrib} imply that 
\begin{equation}
(\om f, \om g) = \om v = v \circ h_2 = m h_1 \circ h_2 + n
h_2 \circ h_2 + t h_0 \circ h_2 = m h_2 + n(-h_1-h_2)+th_0.
\end{equation}
The other equation follows from \eqref{E:lines}.
\end{proof}

For $x =  m h_1 + n h_2 \in \mathcal V_1$, define 
\begin{equation}\label{E:enchilada}
R(x) = R(m h_1 + n h_2) = m + n \om
\end{equation} 
Note that if $v = mh_1 + nh_2 \in \mathcal V_1$, then
Theorem~\ref{rotate} implies that 
\begin{equation}\label{E:rot2}
R(\om v) = -n + (m -n)\om = \om(m + n\om) = \om R(v).
\end{equation}
Using \eqref{affiliatelist} and Theorem~\ref{rotate}, once
we know the canonical expression for  
$(f,g)$, we know the canonical expressions for all of its 
affiliates. The canonical expressions for the solutions listed in the 
introduction are
\begin{equation}
\begin{gathered}
  v_{1} = h_{1}, \quad
  v_{3} = h_{1} + 2h_{2}, \quad
  v_{4} = -2h_{1}, \\
  v_{7} = -2h_{1} - 3h_{2}, \quad 
  v_{9} = -3h_{1},  \quad v_{12} = -2h_{1} - 4h_{2}.
\end{gathered}
\end{equation}
Note that $R(v_1)=1$, $R(v_3)=1+2\om = \om-\om^2 = i\sqrt{3}$
and $R(v_4) = -2$.

It is clear from \eqref{affiliatelist} and
Theorem~\ref{rotate} that each set of 18 affiliates is a
union of the six trios: 
\begin{equation}\label{E:trios}
\begin{gathered}
T(m,n),\quad T(-m,-n),\quad T(-n,m-n), \\  T(n,n-m),\quad
T(n-m,-m),\quad T(m-n,m), 
\end{gathered}
\end{equation}
and every $v \in \mathcal V_0$ is in a trio with some $w \in
\mathcal V_{1}$. As long as $(m,n) \neq (0,0)$, the six trios in
\eqref{E:trios} are distinct.

The argument of Theorem~\ref{rotate} extends to give a closed form for
composition in $\mathcal V$.
\begin{theorem}\label{circdecomp}
\label{compthm}
If $v = m h_1 + n h_2 + t h_0$ and $v' = m' h_1 + n' h_2 + t' h_0$, then
\begin{equation}\label{E:circformula}
\begin{gathered}
v \circ v' = m(m'h_1 + n' h_2 + t'h_0) + n(-n' h_1 + (m'-n') h_2 + t'
h_0)+ t h_0 \\ = (mm' - n n')h_1 + (mn'+m'n-nn')h_2 + ((m+n)t' + t)h_0 .
\end{gathered}
\end{equation}
\end{theorem}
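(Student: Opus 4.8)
The plan is to reduce everything to the two lemmas already in hand, namely Lemma~\ref{simplecirc} (which computes $h_i \circ v$) and Lemma~\ref{distrib} (left-distributivity of $\circ$ over $+$), together with the rotation formula of Theorem~\ref{rotate}. First I would write $v = m h_1 + n h_2 + t h_0$ and apply left-distributivity to expand $v \circ v'$ as
\begin{equation}
v \circ v' = m(h_1 \circ v') + n(h_2 \circ v') + t(h_0 \circ v').
\end{equation}
By the first identity of Lemma~\ref{simplecirc}, $h_1 \circ v' = v'$. By the second identity, $h_2 \circ v' = \om v'$, and by Theorem~\ref{rotate} (applied with $v'$ in place of $v$) we have the canonical expression $\om v' = -n' h_1 + (m'-n') h_2 + t' h_0$. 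Finally $h_0 \circ v'$: since $h_0 = (1:-\om:0)$ is a constant solution at infinity, composing it on the left with $v'$ just substitutes into constants, giving $h_0$ again; so $h_0 \circ v' = h_0$. Substituting the three pieces yields exactly the first line of \eqref{E:circformula}.

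Then it remains only to collect terms. Writing $m v' + n(\om v') + t h_0$ out in the $h_1, h_2, h_0$ basis gives the $h_1$-coefficient $mm' - nn'$, the $h_2$-coefficient $mn' + n(m'-n') = mn' + m'n - nn'$, and the $h_0$-coefficient $mt' + nt' + t = (m+n)t' + t$. That matches the second line of the display, and since $t, t' \in \{0,1,2\}$ one should note in passing that the $h_0$-coefficient is to be read modulo $3$ (as $h_0$ has order $3$), which is harmless for the statement as written.

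The only genuinely delicate point is the claim $h_0 \circ v' = h_0$, i.e. that composition on the left by the constant $h_0$ is absorbing in the way asserted; this needs the homogeneous description \eqref{E:circforms} of $\circ$ rather than the affine one \eqref{E:circ}, because $h_0$ lives at infinity. Concretely, with $v' = (p':q':r')$ one has $h_0 \circ v' = (p' : -\om p' : 0)$ (the $r$-component vanishes since $r$ of $h_0$ is the zero form), which is projectively $(1:-\om:0) = h_0$, independent of $v'$. I expect this to be the main obstacle only in the sense that one must be careful to invoke the right (projective) form of the composition law and to check the common-factor issue is not a problem here; everything else is the bookkeeping of expanding in the fixed basis $\{h_1, h_2, h_0\}$ using the two lemmas, which is routine. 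An alternative, if one prefers to avoid the infinite point entirely, is to note that by \eqref{affiliatelist} every element of $\mathcal V_0$ lies in a trio with an element of $\mathcal V_1$, derive the formula first for $v, v' \in \mathcal V_1$ by the same distributivity argument (where no point at infinity appears on the left), and then track the $h_0$-shift through the trio correspondence; but the direct computation above is cleaner.
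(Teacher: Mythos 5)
Your proposal is correct and follows essentially the same route as the paper: expand $v\circ v'$ by the left-distributivity of Lemma~\ref{distrib}, evaluate $h_1\circ v'$ and $h_2\circ v'$ via Lemma~\ref{simplecirc} and Theorem~\ref{rotate}, observe from \eqref{E:circforms} that $h_0\circ v'=h_0$, and collect coefficients. The paper's own proof is just a terser version of this, so no further comment is needed.
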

\begin{proof}
Since $v\circ v' = m h_1 \circ v' + n h_2 + th_0
\circ v'$,  three applications of Lemma~\ref{distrib}, keeping
\eqref{basiccirc} in mind, give the result. 
\end{proof}

We note a crucial implication of Theorem~\ref{circdecomp} for
elements of $\mathcal V_1$  ($t=t'=0$):
\begin{equation}\label{E:gold}
R(v \circ v') = (mm' - nn') + \om(m n' + m'n - nn') =  R(v)R(w).
\end{equation}
For $y \in \zz[\om]$, let $N(y)$ denote the usual norm. We have 
\begin{equation}\label{E:norm}
\Phi(m,n) := N( m + n \om) = |m + n \om|^2 = ( m + n \om)( m + n \om^2) = m^2
- mn + n^2.  
\end{equation}
Since $N(y) = N(\pm \om^j y)$, we have
\begin{equation}\label{E:phi}
\begin{gathered}
\Phi(m,n)=\Phi(-m,-n)=\Phi(-n,m-n)=\Phi(n,n-m)\\=\Phi(n-m,m)=
\Phi(m-n,-m).
\end{gathered}
\end{equation}
 Further, \eqref{E:gold} implies that
\begin{equation}\label{E:Phiprop}
\Phi(mm'-nn',mn'+m'n-nn') = \Phi(m,n)\Phi(m',n');
\end{equation}
of course, \eqref{E:phi} and \eqref{E:Phiprop}
can also be verified directly. 
It will follow from  Theorem~\ref{mainthm} that $\mathcal V_{0} = 
\mathcal{V}$ and 
\begin{equation}\label{degrees}
d(m h_1 + n h_2 + t h_0) = \Phi(m,n) := m^2 - m 
n + n^2.
\end{equation}

\begin{corollary}\label{comm}
If $v, v' \in \mathcal V$ are given as above, then 
\begin{equation}
v \circ v' - v' \circ v =  (((m+n)t' + t) - ((m'+n')t + t'))h_0
\end{equation}
so $v \circ v'$ and $v' \circ v$ are in the
same trio. Furthermore, $v \circ v' = v' \circ v$  if and only if
$(m+n-1)t' \equiv 
(m'+n'-1)t \pmod {3}$, in particular, if $v, v' \in \mathcal V_1$.
\end{corollary}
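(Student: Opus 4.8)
The plan is to obtain both $v \circ v'$ and $v' \circ v$ from the closed formula \eqref{E:circformula} of Theorem~\ref{circdecomp} and to compare coefficients in the basis $\{h_1, h_2, h_0\}$. Writing out \eqref{E:circformula} once as stated and once with $v$ and $v'$ interchanged,
\begin{align*}
v \circ v' &= (mm' - nn')h_1 + (mn' + m'n - nn')h_2 + ((m+n)t' + t)h_0,\\
v' \circ v &= (m'm - n'n)h_1 + (m'n + mn' - n'n)h_2 + ((m'+n')t + t')h_0,
\end{align*}
one sees at once that the coefficients of $h_1$ and of $h_2$ are symmetric under $(m,n)\leftrightarrow(m',n')$, hence equal, so only the $h_0$-coefficients can differ. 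Subtracting yields
$$v \circ v' - v' \circ v = \big(((m+n)t' + t) - ((m'+n')t + t')\big)h_0,$$
which is the first assertion.

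Next I would invoke the fact, established in Section~\ref{pointadd}, that $h_0$ has order $3$. Thus $v \circ v'$ and $v' \circ v$ differ by an integer multiple of $h_0$ while sharing the same $h_1, h_2$-part, so by the definition of the trios in \eqref{E:trios} they lie in the same trio. Moreover $v \circ v' = v' \circ v$ exactly when the $h_0$-coefficients agree modulo $3$, i.e.\ $(m+n)t' + t \equiv (m'+n')t + t' \pmod 3$; collecting the $t'$ terms on one side and the $t$ terms on the other rewrites this as $(m+n-1)t' \equiv (m'+n'-1)t \pmod 3$. When $v, v' \in \mathcal V_1$ we have $t = t' = 0$, so the congruence is automatic and $\circ$ is commutative there --- in agreement with \eqref{E:gold} and the commutativity of $\zz[\om]$.

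I do not expect any genuine obstacle: the corollary is pure bookkeeping built on top of Theorem~\ref{circdecomp}. The one place to be attentive is that the $h_0$-coefficients of the two compositions need not be equal as integers, only modulo $3$ once we use $3h_0 = 0$; after that the rearrangement into the displayed congruence is routine.
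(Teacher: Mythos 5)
Your proposal is correct and is exactly the intended argument: the paper states Corollary~\ref{comm} as an immediate consequence of the closed formula \eqref{E:circformula}, whose $h_1$- and $h_2$-coefficients are visibly symmetric in $(m,n,t)\leftrightarrow(m',n',t')$, leaving only the $h_0$-coefficients to compare modulo $3$. Your rearrangement to $(m+n-1)t'\equiv(m'+n'-1)t\pmod 3$ and the specialization $t=t'=0$ for $\mathcal V_1$ match the paper.
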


We also remark  that Theorem~\ref{compthm} and the to-be-proved formula
\eqref{degrees} combine to 
imply that $d(v \circ v') = d(v)d(v')$, so that no cancellation
occurs in the composition. We note one more corollary to
\ref{compthm}, which follows from the bi-homogeneity in the pairs of
variables $(m,n)$ and $(m',n')$ of \eqref{E:circformula} for elements
of $\mathcal V_1$; the corollary is not generally true in $\mathcal V$.
\begin{corollary}\label{rv}
For $v, v' \in \mathcal V_1$ and $r \in \zz$, $(rv) \circ v' = v \circ (rv')
= r(v\circ v')$. In particular, taking $v' = h_1$, we have $rv = v
\circ (rh_1)$.
\end{corollary}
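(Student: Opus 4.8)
The plan is to read the statement off directly from the closed-form composition formula \eqref{E:circformula} of Theorem~\ref{compthm}, specialized to $\mathcal V_1$ (where $t = t' = 0$). First I would record the elementary fact that for $v = m h_1 + n h_2 \in \mathcal V_1$ and $r \in \zz$, the group multiple $rv$ (with the usual convention $rv = (-r)(-v)$ when $r<0$) has canonical expression $(rm)h_1 + (rn)h_2$, and in particular lies in $\mathcal V_1$. This holds because $h_1, h_2$ freely generate $\mathcal V_1 \cong \zz^2$ and $-v = (-m)h_1 + (-n)h_2$, so the group-theoretic multiple of $v$ is just scalar multiplication on the coordinate vector $(m,n)$. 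Thus both $(rv)\circ v'$ and $v \circ (rv')$ are again governed by \eqref{E:circformula}.

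Next I would substitute. With $t = t' = 0$, formula \eqref{E:circformula} reads $v \circ v' = (mm' - nn')h_1 + (mn' + m'n - nn')h_2$. Replacing $(m,n)$ by $(rm, rn)$ and using that each coordinate expression is linear in $(m,n)$ gives
\begin{equation}
(rv)\circ v' = (rmm' - rnn')h_1 + (rmn' + rm'n - rnn')h_2 = r(v\circ v').
\end{equation}
Symmetrically, replacing $(m',n')$ by $(rm',rn')$ and using linearity in $(m',n')$ gives $v \circ (rv') = r(v\circ v')$. This is exactly the bi-homogeneity of \eqref{E:circformula} in the pairs $(m,n)$ and $(m',n')$ noted just before the statement. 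For the final clause, take $v' = h_1 = 1\cdot h_1 + 0\cdot h_2$; by Lemma~\ref{simplecirc}, $v \circ h_1 = v$, so $v \circ (rh_1) = r(v \circ h_1) = rv$.

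There is no real obstacle here; the corollary is purely bookkeeping once Theorem~\ref{compthm} is in hand. The only points that deserve a sentence are the identification $rv = (rm)h_1 + (rn)h_2$ used in the first step, and the reason the result fails in $\mathcal V$ in general: the extra summand $((m+n)t' + t)h_0$ in \eqref{E:circformula} is not linear in $(m,n)$ (nor in $(m',n')$) because of the $th_0$ term, so the argument genuinely requires $t = t' = 0$.
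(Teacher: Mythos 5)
Your proof is correct and is exactly the argument the paper intends: the paper justifies this corollary in one line by appealing to the bi-homogeneity of \eqref{E:circformula} in $(m,n)$ and $(m',n')$ when $t=t'=0$, which is precisely the substitution you carry out. Your added remarks on the identification $rv=(rm)h_1+(rn)h_2$ and on why the $h_0$-term blocks the extension to all of $\mathcal V$ are accurate and match the paper's parenthetical caveat.
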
 

\begin{theorem}\label{E:Ringo}
The map $R$ is a ring isomorphism between $\mathcal V_{1}$ (with the
operations of point-addition and composition) and
$\zz[\om]$. Furthermore, with the appropriate definitions of
multiplication by $\om$ and the usual complex conjugation,
\begin{equation}\label{E:extras}
R(\om v) = \om R(v), \qquad R(\overline v) = \overline{R(v)}.
\end{equation}
\end{theorem}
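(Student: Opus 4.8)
The plan is to verify the three defining properties of a ring isomorphism and then the two identities in \eqref{E:extras}; all but one step is an assembly of facts already in place, and the exceptional step is injectivity.

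\emph{Preservation of the two operations.} Point-addition makes $\mathcal V$ an abelian group, so inside $\mathcal V_1$ one has $(mh_1+nh_2)+(m'h_1+n'h_2)=(m+m')h_1+(n+n')h_2$; applying $R$ gives additivity at once. Theorem~\ref{circdecomp} with $t=t'=0$ shows $\mathcal V_1$ is closed under $\circ$, and \eqref{E:gold} is literally the assertion $R(v\circ v')=R(v)R(v')$. Since $h_1$ is the identity for $\circ$ (Lemma~\ref{simplecirc}) and $R(h_1)=1$, the unit goes to the unit; $\circ$ is associative on $\mathcal V_1$ because composition of functions is, and distributivity holds by Lemma~\ref{distrib} together with the commutativity of $\circ$ on $\mathcal V_1$ from Corollary~\ref{comm}, so $\mathcal V_1$ really is a ring and $R$ a ring homomorphism. (Alternatively, once $R$ is a bijection preserving both operations, one may simply transport the ring structure of $\zz[\om]$.)

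\emph{Bijectivity.} Surjectivity is trivial, since $m+n\om=R(mh_1+nh_2)$. Well-definedness and injectivity of $R$ amount to the single statement that $h_1$ and $h_2$ are $\zz$-linearly independent in $\mathcal V$---equivalently, that the expression $v=mh_1+nh_2$ determining $R(v)$ is unique. This is the one part that is not pure formalism: it is part of Theorem~\ref{mainthm} (that $h_1,h_2$ have infinite order and generate a free rank-$2$ subgroup), which emerges from the analysis of the Mordell--Weil-type group of the curve \eqref{titleequation} in Section~\ref{mwtheory}, the same circle of ideas that yields $\mathcal V_0=\mathcal V$ and the degree formula \eqref{degrees}. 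Granting it, $\mathcal V_1$ is free abelian of rank $2$ on $h_1,h_2$ and $R$ has trivial kernel. I expect this to be the main obstacle: the homomorphism properties are bookkeeping, but injectivity genuinely needs the arithmetic of the elliptic curve rather than the algebraic identities of Section~\ref{pointadd}.

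\emph{The extra identities.} $R(\om v)=\om R(v)$ is exactly \eqref{E:rot2}, a consequence of Theorem~\ref{rotate}. For conjugation, first observe that complex conjugation (conjugating the coefficients of the rational functions) is an automorphism of $\mathcal V$: the addition law \eqref{E:all}, the doubling rule, and the rules \eqref{sumsym}, \eqref{E:infi} at infinity are all rational over $\qq$ in the point coordinates (with $\overline{h_0}=2h_0$), so $\overline{v+v'}=\overline v+\overline{v'}$. Then $\overline{h_1}=(x,y)=h_1$, and since $\overline\om=\om^2$ we get $\overline{h_2}=(\om^2 x,\om^2 y)=\om^2 h_1=-h_1-h_2$ by Lemma~\ref{simplecirc}. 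Extending by additivity, $\overline{mh_1+nh_2}=(m-n)h_1-nh_2$, so $R(\overline v)=(m-n)-n\om=m+n\om^2=\overline{m+n\om}=\overline{R(v)}$, completing the proof.
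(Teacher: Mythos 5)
Your proof is correct and follows essentially the same route as the paper's: additivity from the group law, multiplicativity from \eqref{E:gold}, the ring structure on $\mathcal V_1$ from Lemma~\ref{distrib} and Corollary~\ref{comm}, $R(\om v)=\om R(v)$ from \eqref{E:rot2}, and conjugation via $\overline{h_2}=-h_1-h_2$ exactly as in \eqref{E:Rconjugate}. If anything you are more careful than the paper, which dismisses bijectivity with ``Clearly, $R$ is a bijection''; you correctly isolate the well-definedness and injectivity of $R$ as the one genuinely non-formal step, resting on the $\zz$-linear independence of $h_1,h_2$ established by the elliptic-curve argument of Section~\ref{mwtheory}.
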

\begin{proof}
That $\mathcal{V}_{1}$ is a ring with respect to addition and
composition follows from Lemma~\ref{distrib} in one direction
(and from Corollary~\ref{comm} in the other). We remark that
Corollary~\ref{comm} implies that the full set $\mathcal V$ itself is
not a ring with 
composition as ``multiplication'', because the right-distributive law
fails. In particular,  if $v, v'\in \mathcal V$ then by
\eqref{E:circforms}, $h_0 = h_0 \circ v = h_0 \circ v' = h_0 \circ (v +
v')$, but $h_0 \neq 2h_0$.

Clearly, $R$ is a bijection and $R(v+w) = R(v)+R(w)$. If $v = mh_1 +
nh_2$ and $w = m'h_1 + n'h_2$, then as we have seen in \eqref{E:gold},
$R(v\circ w) = R(v)R(w)$. That $R(\om v) = \om R(v)$ was
shown in equation \eqref{E:rot2}. For the second statement, we needn't
concern ourselves with points at infinity, and the exact formulas of
\eqref{E:all} imply that complex conjugation factors through addition, so that
\begin{equation}
(X_1,Y_1) + (X_2,Y_2) = (Z,W) \implies \overline{(X_1,Y_1)} +
\overline{(X_2,Y_2)} = \overline{(Z,W)}.
\end{equation}
Since $\overline{h_1} = h_1$ and  $\overline{h_2} = \overline{(\om x, \om
  y)} = (\om^2 x, \om^2 y) = \om h_2 = -h_1-h_2$, we see that if $v =
mh_1 + nh_2$, then 
\begin{equation}\label{E:Rconjugate}\begin{gathered}
\overline{v} = m h_1 + n (-h_1-h_2) \implies \\ R(\overline{v}) = m
+n(-1-\om) = m + n \om^2 = \overline{m + n\om} = \overline{R(v)}. 
\end{gathered}
\end{equation}
\end{proof}

\begin{corollary}\label{inv}
If $v = m_0 h_1 + n_0 h_2 \in \mathcal V_1$
 and $w = m_1 h_1 + n_1 h_2$, then there exists
 $v' \in \mathcal V$ such that $v = v' \circ w$ if and only if
$\frac {m_0+n_0\om}{m_1+n_1\om} \in \zz[\om]$; that is, if and only if
\begin{equation}\label{E:div}
m_0m_1 + n_0 n_1 \equiv m_0n_1 \equiv m_1 n_0 \pmod {m_1^2 - m_1n_1 + n_1^2}
\end{equation}
\end{corollary}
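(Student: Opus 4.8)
The plan is to transport the entire question through the ring isomorphism $R\colon\mathcal V_1\to\zz[\om]$ of Theorem~\ref{E:Ringo}, and then to unwind what divisibility in $\zz[\om]$ means for integer pairs. I will assume $(m_1,n_1)\neq(0,0)$ (otherwise $w=0$ and the quotient is undefined), so that $N:=m_1^2-m_1n_1+n_1^2=\Phi(m_1,n_1)$ is a positive integer — the origin being the only zero of $\Phi$ on $\zz^2$.

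For the ``if'' direction I would argue as follows: if $\dfrac{m_0+n_0\om}{m_1+n_1\om}=m'+n'\om\in\zz[\om]$, set $v':=m'h_1+n'h_2\in\mathcal V_1\subseteq\mathcal V$. Then by \eqref{E:gold}, $R(v'\circ w)=R(v')R(w)=(m'+n'\om)(m_1+n_1\om)=m_0+n_0\om=R(v)$, and injectivity of $R$ (Theorem~\ref{E:Ringo}) gives $v'\circ w=v$.

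For the ``only if'' direction, suppose $v=v'\circ w$ for some $v'\in\mathcal V$. Invoking $\mathcal V=\mathcal V_0$ (Theorem~\ref{mainthm}), write $v'=m'h_1+n'h_2+t'h_0$ in canonical form. Since the $h_0$-coordinate of $w=m_1h_1+n_1h_2$ vanishes, the composition formula \eqref{E:circformula} of Theorem~\ref{circdecomp} shows the $h_0$-coordinate of $v'\circ w$ is simply $t'$; comparing with $v\in\mathcal V_1$ forces $t'=0$, so $v'\in\mathcal V_1$. Now \eqref{E:gold} gives $R(v)=R(v')R(w)$, whence $\dfrac{m_0+n_0\om}{m_1+n_1\om}=R(v')\in\zz[\om]$. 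This reduction — showing a priori that $v'$ must lie in $\mathcal V_1$ — is the one slightly delicate point; it depends on having the canonical form (i.e.\ on $\mathcal V=\mathcal V_0$) and then just reading off a coordinate.

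Finally I would make the congruence condition explicit. Multiplying numerator and denominator of $\dfrac{m_0+n_0\om}{m_1+n_1\om}$ by $\overline{m_1+n_1\om}=m_1+n_1\om^2=(m_1-n_1)-n_1\om$ and reducing with $\om^2=-1-\om$ yields
\[
\frac{m_0+n_0\om}{m_1+n_1\om}=\frac{(m_0m_1-m_0n_1+n_0n_1)+(m_1n_0-m_0n_1)\,\om}{N}.
\]
This lies in $\zz[\om]$ exactly when $N$ divides both $m_0m_1-m_0n_1+n_0n_1$ and $m_1n_0-m_0n_1$, i.e.\ when $m_0m_1+n_0n_1\equiv m_0n_1\pmod N$ and $m_0n_1\equiv m_1n_0\pmod N$, which is precisely \eqref{E:div}. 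Everything past the $\mathcal V_1$-reduction is routine computation in $\zz[\om]$.
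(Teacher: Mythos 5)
Your proof is correct and follows essentially the same route as the paper's: transport the question through the ring isomorphism $R$ via \eqref{E:gold}, then perform the rationalization $\frac{m_0+n_0\om}{m_1+n_1\om}=\frac{(m_0m_1-m_0n_1+n_0n_1)+(m_1n_0-m_0n_1)\om}{m_1^2-m_1n_1+n_1^2}$, which is the entirety of the paper's displayed computation. Your extra step ruling out $t'\neq 0$ by reading off the $h_0$-coordinate from \eqref{E:circformula} is a worthwhile addition, since the statement only posits $v'\in\mathcal V$ rather than $v'\in\mathcal V_1$ and the paper silently elides this point.
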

\begin{proof}
A routine calculation shows that
\begin{equation}
\begin{gathered}
\frac {m_0+n_0\om}{m_1+n_1\om} 
= \frac{ m_0m_1 + n_0n_1 - m_0n_1 + (m_1n_0 - m_0n_1)\om}{m_1^2-m_1n_1+n_1^2}.
\end{gathered}
\end{equation}

\end{proof}

\begin{corollary}\label{realmaker}
If $v = m h_1 + n h_2 \in \mathcal V_1$, then
\begin{equation}\label{E:real}
v \circ \bar v = N(R(v)) h_1 = (m^2-mn+n^2) h_1.
\end{equation}
\end{corollary}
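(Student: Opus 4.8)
The plan is to derive Corollary~\ref{realmaker} as an immediate consequence of the ring isomorphism $R$ established in Theorem~\ref{E:Ringo}, together with Corollary~\ref{realmaker}'s own precursors. The key observation is that the claimed identity lives entirely inside $\mathcal V_1$ (both $v$ and $\bar v = m h_1 + n(-h_1 - h_2)$ lie in $\mathcal V_1$, and $N(R(v)) h_1$ visibly does too), so it suffices to check the identity after applying $R$, since $R$ restricted to $\mathcal V_1$ is injective.

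The steps, in order: first, recall from \eqref{E:Rconjugate} that $R(\bar v) = \overline{R(v)}$. Second, apply the multiplicativity of $R$ under composition, $R(v \circ \bar v) = R(v) R(\bar v) = R(v)\overline{R(v)} = |R(v)|^2 = N(R(v))$, using \eqref{E:norm}. Third, compute $R(N(R(v)) h_1) = N(R(v)) \cdot R(h_1) = N(R(v)) \cdot 1 = N(R(v))$, since $R(h_1) = 1$ and $R$ is $\zz$-linear (indeed $\zz[\om]$-linear in the relevant sense, but $\zz$-linearity suffices here because $N(R(v)) \in \zz$); alternatively invoke Corollary~\ref{rv} with $r = N(R(v))$ and $v' = h_1$ to see directly that $N(R(v)) h_1 = h_1 \circ (N(R(v)) h_1)$ is the $r$-fold sum, though the cleanest route is just $R$-linearity. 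Since $R$ is a bijection on $\mathcal V_1$, equality of the $R$-images forces $v \circ \bar v = N(R(v)) h_1$. Finally, expand $N(R(v)) = N(m + n\om) = m^2 - mn + n^2 = \Phi(m,n)$ by \eqref{E:norm} to get the stated closed form.

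One could also give a direct computational proof using Theorem~\ref{circdecomp}: writing $\bar v = m h_1 + n(-h_1 - h_2) = (m-n) h_1 - n h_2$ (so its coordinates are $m' = m-n$, $n' = -n$, $t' = 0$), plug into \eqref{E:circformula} to get $v \circ \bar v = (m(m-n) - n(-n)) h_1 + (m(-n) + (m-n)n - n(-n)) h_2 + 0 \cdot h_0 = (m^2 - mn + n^2) h_1 + (-mn + mn - n^2 + n^2) h_2 = (m^2 - mn + n^2) h_1$, the $h_2$-coefficient collapsing to zero. This is a good sanity check and makes the result self-contained, but it duplicates the content already packaged in Theorem~\ref{E:Ringo}.

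There is essentially no obstacle here: the corollary is a formal bookkeeping consequence of the isomorphism, and the only thing to be careful about is confirming that $\bar v \in \mathcal V_1$ (true, since $\mathcal V_1$ is closed under conjugation by \eqref{E:Rconjugate}) so that the multiplicativity statement $R(v \circ \bar v) = R(v) R(\bar v)$ — which was proved only for elements of $\mathcal V_1$ — actually applies. The mild temptation to worry about whether "$v \circ \bar v$" might pick up an extra $h_0$ term (as happens for general $\mathcal V_0$ compositions, cf. Corollary~\ref{comm}) is dispelled by noting $t = t' = 0$, so the $h_0$-coefficient $(m+n)t' + t$ vanishes. I would present the $R$-based argument as the main proof and relegate the direct computation to a remark or omit it.
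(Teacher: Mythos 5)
Your proof is correct and follows essentially the same route as the paper, whose entire proof is the one-line observation that \eqref{E:Rconjugate} gives $R(v)R(\bar v)=N(R(v))$, with the rest (multiplicativity of $R$ under $\circ$ and injectivity of $R$ on $\mathcal V_1$) left implicit from Theorem~\ref{E:Ringo}. Your additional direct verification via \eqref{E:circformula} is a correct sanity check but not needed.
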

\begin{proof}
It follows from \eqref{E:Rconjugate} that $R(v)R(\bar v) = N(R(v))$.
\end{proof}

If particular, since $g_3 = \bar f_3$, $\bar v_3 = - v_3$ and we 
recover that $v_3 \circ v_3= v_9$. It  follows from \eqref{E:Rconjugate} that 
$\overline{mh_1} = mh_1$ for all $m \in \zz$; thus
Corollary~\ref{realmaker} implies that each $v \in \mathcal V_1$ has a
``composition multiple'' which is real. Observe that $m h_1 + n h_2$
and $n h_1 + m h_2$ are not, in general, affiliates, although
$\Phi(m,n) = \Phi(n,m)$. 

\begin{corollary}\label{conju}
If $v = m h_1 + n h_2$, then $\om \overline{v} = n h_1 + m h_2$.
\end{corollary}
\begin{proof}
This follows immediately from $n+ m\om = \om(\overline{m + n\om})$.
\end{proof}

\begin{corollary}\label{oddeven}
If $v \in \mathcal V_1$, then $v$ and $\bar v$ are affiliates if and
only if $v$ is an affiliate of  $m v_1$ or $m v_3$ for some integer
$m$. 
\end{corollary}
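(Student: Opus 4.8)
The plan is to translate the affiliation condition into the language of $\zz[\om]$ via the ring isomorphism $R$ of Theorem~\ref{E:Ringo}, and then solve a simple divisibility/congruence problem there. Recall from \eqref{E:trios} that the eighteen affiliates of $v \in \mathcal V_1$ lie in the six trios $T(m,n)$, $T(-m,-n)$, $T(-n,m-n)$, $T(n,n-m)$, $T(n-m,-m)$, $T(m-n,m)$; under $R$, these correspond (using \eqref{E:rot2}) exactly to the six associates $\pm\om^j R(v)$ of $R(v)$ in $\zz[\om]$, for $j=0,1,2$. Meanwhile, by \eqref{E:Rconjugate} we have $R(\bar v) = \overline{R(v)}$. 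So the first step is the observation: \emph{$v$ and $\bar v$ are affiliates if and only if $\overline{R(v)}$ is an associate of $R(v)$ in $\zz[\om]$}, i.e. $\overline{R(v)} = \pm\om^j R(v)$ for some $j \in \{0,1,2\}$.

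Next I would solve this equation in $\zz[\om]$. Write $\al = R(v) = m + n\om$. The condition $\bar\al = \epsilon\al$ with $\epsilon$ a unit splits into cases according to $\epsilon$. Taking absolute values gives no information since all units have norm $1$; instead one argues directly. If $\bar\al = \al$ then $\al \in \zz$, i.e. $n = 0$ and $\al = m$, corresponding to $v$ an affiliate of $mh_1 = mv_1$. If $\bar\al = -\al$ then $\al$ is purely imaginary; writing $\al = m + n\om = (m - n/2) + (n\sqrt3/2)i$, the real part must vanish, forcing $2m = n$, so $\al = n\om - n\om^2 = n(\om-\om^2) = ni\sqrt3$, and since $R(v_3) = 1 + 2\om = i\sqrt3$ (computed in the text right after the canonical expressions), this says $\al = nR(v_3)$, i.e. $v$ is an affiliate of $nv_3 = nh_1 + 2nh_2$. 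The remaining four cases $\epsilon = \pm\om, \pm\om^2$ reduce to the previous two: multiplying $\al$ by a suitable power of $\om$ replaces $v$ by an affiliate, and $\overline{\om^k\al} = \om^{-k}\overline{\al}$, so $\bar\al = \om^j\al$ is equivalent to $\overline{\om^{-k}\al} = \om^{j-2k}(\om^{-k}\al)$; choosing $k$ to make the exponent $0$ reduces to the case $\bar\beta = \beta$ for the associate $\beta = \om^{-k}\al$, and similarly $\bar\al = -\om^j\al$ reduces to $\bar\beta = -\beta$. In either situation the conclusion (affiliate of $mv_1$ or $mv_3$) is preserved, since $v_1$ and $v_3$ are closed under affiliation.

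Conversely, one checks the easy direction: if $v$ is an affiliate of $mv_1$ then $R(v)$ is an associate of $m \in \zz$, which is real, so $\overline{R(v)}$ is again an associate of $R(v)$ and hence $\bar v$ is an affiliate of $v$; and if $v$ is an affiliate of $mv_3$ then $R(v)$ is an associate of $mi\sqrt3$, whose conjugate $-mi\sqrt3$ is again an associate of it, giving the same conclusion. (Alternatively, invoke $\overline{mh_1} = mh_1$ and $\bar v_3 = -v_3$, both noted in the text, together with the fact that complex conjugation commutes with multiplication by $\om$ up to an associate.)

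I do not expect a serious obstacle here; the one point requiring a little care is the bookkeeping in the reduction of the four ``twisted'' cases $\epsilon = \pm\om^{\pm1}$ to the two untwisted ones — making sure that the power of $\om$ used to untwist $\al$ genuinely corresponds to passing to an affiliate of $v$ (which it does, by \eqref{E:rot2} and the trio description \eqref{E:trios}) and that it does not take us outside the classes $\{mv_1\}$, $\{mv_3\}$ up to affiliation. Everything else is the elementary arithmetic of associates in the Euclidean ring $\zz[\om]$, already set up by Theorem~\ref{E:Ringo} and \eqref{E:Rconjugate}.
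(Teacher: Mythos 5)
Your proposal is correct and is essentially the paper's own argument in different clothing: the paper's ``somewhat tedious comparison of the trios for $(m,n)$ and $(n,m)$'' is exactly your check of when $\overline{R(v)}$ is an associate of $R(v)$ in $\zz[\om]$, carried out case by case over the six units. (Two harmless slips: in the $\bar\al=-\al$ case the condition $n=2m$ gives $\al=m(\om-\om^2)=mR(v_3)$ rather than $n(\om-\om^2)$, and the exponent in your untwisting step should be $j+2k$ for $\beta=\om^{-k}\al$ — but either way a suitable $k$ exists, so the argument stands.)
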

\begin{proof}
This follows from a somewhat tedious comparison of \eqref{E:trios} for
$(m,n)$ and $(n,m)$. Equality holds if $mn(m-n)=0$ or
$(m+n)(m-2n)(2m-n)=0$, which give multiples of $v_1$ and $v_3$
respectively. 
\end{proof}
We note that $\overline{mv_1} = mv_1$ and $\overline{mv_3} = -mv_3$.
It follows that the number of solutions of degree $d$, $f(d)$, is even
unless $d = m^2$ or $d = 3m^2$. 

We now turn to proving Theorem~\ref{conseqthm} for points in
$\mathcal{V}_{0}$, assuming Theorem~\ref{mainthm}. We show that
assertions (1) through (6) hold for $v, w\in \mathcal V_{1}$;
since the components of $v + th_0$ differ from $v$ by powers of $\om$,
which do not affect the assertions,
the claimed results will also hold for $\mathcal V_0$.  

\begin{proof}[Proof of Theorem~\ref{conseqthm}]
We start with $(1)$. Let $v =  m h_1 + n h_2$ and $v' = (y,x)= -
h_1$. Then $v \circ v' = (f(y,x),g(y,x))$ by \eqref{E:circ}. On the other
hand, $R(v \circ v') = R(v)R(v') = -R(v) = -m -n\om = R(-v)$, hence
$v \circ v' = -v$; that is,  $(f(y,x),g(y,x)) = (g(x,y),f(x,y))$.

Item $(2)$ is Corollary~\ref{comm}.

Item $(3)$ follows from Theorem~\ref{mainthm} together with the
observation (from \eqref{E:all}) that if $v$ and $v'$ have
coefficients in $\qq(\omega)$, then so does $v + v'$.

To prove $(4)$, let $v = m h_{1} + n h_{2}$ and note that 
$d(v) = (m+n)^2-3mn \equiv 0 \pmod{3}$ implies
that $m+n \equiv 0 \pmod{3}$. Applying \eqref{E:div} from
Corollary~\ref{inv} gives $-m-2n\equiv -2m \equiv -n \pmod{3}$ as a
condition for the existence of $v' \in \mathcal V_1$ so that $v = v'
\circ v_3$, and these are satisfied when $3 \ | \ m+n$. 
 If $v' = (\frac {p'}{r'}, \frac
{q'}{r'})$, then by 
\eqref{E:circforms},
\begin{equation}
\begin{gathered}
p(x,y) = p'(\ze^{-1} x^3 + \ze y^3,\ze x^3 + \ze^{-1} y^3), \\
q(x,y) = q'(\ze^{-1} x^3 + \ze y^3,\ze x^3 + \ze^{-1} y^3), \\
r(x,y) = \sqrt{3}\ xy\  r'(\ze^{-1} x^3 + \ze y^3,\ze x^3 + \ze^{-1} y^3),
\end{gathered}
\end{equation}
which verifies the asserted shape. (Compare with the earlier discussion
of \eqref{E:lucas}.)

Next, we prove $(5)$. Since $(m+n)^2 \equiv d(v) \equiv 1
\pmod{3}$, for one choice of sign (say $+$), we have
 $\pm v = m h_{1} + n h_{2}$, where $m+n \equiv 1 \pmod{3}$. (The
 choice of sign amounts to a possible permutation of $f$ and $g$.) Let
$v = (f,g) = (p/r,q/r)$. Since $(\om x,y) = -h_1-h_2+2h_0$,
Theorem~\ref{compthm} now implies that 
\[
  v \circ (\omega x, y) = (n-m) h_{1} - m h_{2} + 2(m+n)h_{0} =  (n-m)
  h_{1} - m h_{2} + 2h_{0},  
\]
which by \eqref{affiliatelist} and
Theorem~\ref{rotate} 
equals $(\om f,  g)$. In other words, 
\begin{equation}\label{E:1.25}
\left( \frac{p(\om x, y)}{r(\om x,y)}, \frac{q(\om x, y)}{r(\om x,y)}
\right) = \left( \om \frac{p( x, y)}{r(x,y)}, \frac{q(x, y)}{r(x,y)}
\right).
\end{equation}
Thus,  $p(\om x, y) r(x,y) = \om p(x,y) r(\om x, y)$ and  $q(\om
x, y) r(x,y) =  q(x,y) r(\om x, y)$. Since $p(x,y)$ and $r(x,y)$
are relatively prime, we have $p(x,y) | p(\om x, y)$; since they have
the same degree, it follows that $p(\om x,y) = c_p p(x,y)$ for $c_p
\in \mathbb C$.  Similarly, $q(\om x,y) = c_q q(x,y)$ and $r(\om x,y) = c_r
r(x,y)$. Since $p,q,r \neq 0$, examination at any non-zero monomial shows that
each constant is a power of $\om$ and so all powers of $x$ occuring in
$p$ with non-zero coefficient are congruent modulo 3, and similarly
for $q$ and $r$. Since $d \equiv 1 \pmod{3}$, the choices are $p(x,y)
= xP(x^3,y^3), 
yP(x^3,y^3)$ or $x^2y^2 P(x^3,y^3)$ for some polynomial $P$; 
 $q(x,y) = xQ(x^3,y^3), yQ(x^3,y^3)$ or $x^2y^2 Q(x^3,y^3)$ for some
 polynomial $Q$; and  $r(x,y) = R(x^3,y^3), xy^2R(x^3,y^3)$ or $x^2y
 R(x^3,y^3)$ for some polynomial $R$. Since $p$ 
 and $q$ are relatively prime, there cannot be a common factor
 of $x$ or $y$, hence $(p(x,y),q(x,y))$ is either $(xP(x^3,y^3),
 yQ(x^3,y^3))$ or $(yP(x^3,y^3),xQ(x^3,y^3))$. 
 Upon dividing the components of either side of \eqref{E:1.25}, we find that
 \begin{equation}
 \frac{p(\om x, y)}{q(\om x,y)} = \om  \frac{p(x, y)}{q(x,y)}, 
 \end{equation}
 hence $p(x,y) = xP(x^3,y^3)$ and $q(x,y) = yQ(x^3,y^3)$;
 \eqref{E:1.25} now implies that  $r(x,y) = r(\om x, y)$, so $r(x,y) =
 R(x^3,y^3)$. 

Item $(6)$ follows immediately from $(4)$ and $(5)$.

To prove $(7)$, note that \eqref{E:infi} $\overline{h_0} =
\overline{(1:-\om:0)} = (1:-\om^2:0) = 2h_0$ and so \eqref{E:infi} and
Theorem~\ref{E:Ringo} imply that 
\begin{equation}\label{E:conjugate}
\overline{ m h_1 + n h_2 + t h_0} = m h_1 + n (-h_1-h_2) + t(2h_0).
\end{equation}
If $v$ is real, then $v = \overline v$ so that $nh_1 + 2nh_2 - th_0 =
0$, hence $n=t=0$. Note also that if $v =(f,g) = rh_1$, then $f,g \in
\qq(x,y)$. 

Finally, we turn to $(8)$.  Since each solution has 18 affiliates and
Theorem~\ref{mainthm} implies that 
$d(mh_{1} + nh_{2} + th_{0}) = m^{2} - mn + n^{2}$, $t \in \{0,1,2\}$, we have
 \begin{equation}\label{E:genfn}
1 + 6\sum_{d=1}^\infty f(d)z^d =  \sum_{m=-\infty}^{\infty}
\sum_{n=-\infty}^{\infty} z^{m^2-mn+n^2}.
\end{equation}
It is fairly well-known that
\begin{equation}\label{E:genfn2}
 \sum_{m=-\infty}^{\infty}
\sum_{n=-\infty}^{\infty} z^{m^2-mn+n^2} =  1 + 6 \sum_{i=0}^\infty \left(
  \frac{z^{3i+1}}{1-z^{3i+1}} - \frac{z^{3i+2}}{1-z^{3i+2}}\right).
\end{equation}
The equations \eqref{E:genfn} and \eqref{E:genfn2} combine to imply
\eqref{E:refsugg}.
The identity \eqref{E:genfn2} has a convoluted
history, as described by our colleague Bruce Berndt in, for example,
\cite[p.78]{B} and \cite[pp.196-199]{BR}, and by Hirschhorn in
\cite{H}.  Its arithmetical equivalent is a special
case of an 1840 theorem of Dirichlet. It was found independently by
Lorenz and Ramanujan. 

It follows from \eqref{E:refsugg} that $f(p^k) = k+1$ if
$p \equiv 1 \pmod{3}$; if $p \equiv 2
\pmod{3}$, then $f(p^k)$ equals 0 or 1, depending on whether $k$ is
odd or even. Since  $f(d)$ is multiplicative, $f(n) > 0$ implies 
that no prime $\equiv 2 \pmod{3}$ can appear to an odd power in the
prime factorization of $n$.  We note also that 
$\{f(n)\}$ is unbounded as $n \to \infty$. Since the taxicab number 
$1729 = 7 \cdot 13 \cdot 19$, we have $f(1729) = 8$: there are 8 solutions to
\eqref{titleequation} in which $p$ and $q$ have degree 1729.  
\end{proof}

\section{Proof of Theorem~\ref{mainthm}}
\label{mwtheory}

In this section, we will prove Theorem~\ref{mainthm}. We will 
consider solutions to
\[
  E : p^{3} + q^{3} = (x^{3} + y^{3}) r^{3}
\]
where $0 \neq p, q, r \in \cc[x,y]$ are homogeneous polynomials
with $\deg(p) = \deg(q) = \deg(r) - 1$. 

For such a point $P = (a(x,y) : b(x,y) : c(x,y))$, the map
\[
  \phi_{P}(p : q : r) = (a(p,q) : b(p,q) : c(p,q) r)
\]
is a morphism from $E$ to itself. There are two basic properties 
that immediately follow from the definition. First,
\begin{equation}
\label{identity}
  \phi_{P}(x : y : 1) = (a(x,y) : b(x,y) : c(x,y)) = P.
\end{equation}
Second, for any point $P$, the map $\phi_{P}$ permutes the
points at infinity: $(1 : -1 : 0)$, $(1 : -\omega : 0)$ and 
$(1 : -\omega^{2} : 0)$.

Before we continue, we need a lemma.
\begin{lemma}
\label{inforder}
The point $P = (x : y : 1) \in \mathcal{V}$ has infinite order.
\end{lemma}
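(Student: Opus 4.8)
The plan is to show that the point $P = (x:y:1)$ generates an infinite subgroup of $\mathcal{V}$ by exhibiting infinitely many distinct multiples of $P$, which amounts to showing that the degrees $d(nP)$ of the solutions $nP$ are unbounded. First I would compute a few small multiples explicitly: we already know $-2P = (x:y:1)\circ(-v_4)$ from the remark following \eqref{E:all}, i.e.\ $2P$ corresponds to the Vi\`ete solution $v_4$ of degree $4$, so $2P$ has degree $4 > 1 = d(P)$. Iterating the doubling formula in \eqref{E:all}, or more conveniently iterating composition with $v_4$, produces solutions whose degrees grow like $4^k$ (composition multiplies degrees, up to possible cancellation). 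The core claim to pin down is that these degrees genuinely grow, i.e.\ that no collapse to a bounded set occurs.

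The cleanest route is an argument by contradiction using torsion. If $P$ had finite order, then the cyclic group $\langle P\rangle$ would be finite, so all the solutions $nP$ would have degrees bounded by some constant $D$. In particular $v_4 = 2P$ and all its ``composition powers'' $v_4^{\circ k}$ (which equal $(-2)^k P$ up to sign by the remark $2(X_1,Y_1) = (X_1,Y_1)\circ(-v_4)$, since $-v_4 = 2h_1$ in the canonical expressions) would lie in $\langle P\rangle$. But a direct examination of $v_4 \circ v_4$ shows its numerator and denominator forms have degree $16$ with no common factor (one checks the constant terms or the behavior at $x=0$), so $d(v_4^{\circ 2}) = 16 > 4$, and inductively $d(v_4^{\circ k}) = 4^k$. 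This contradicts boundedness, so $P$ has infinite order.

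Alternatively, and perhaps more in the spirit of the surrounding development, I would invoke the explicit doubling formula \eqref{E:double}/\eqref{E:all}: writing $2(X_1,Y_1)$ in lowest terms when $(X_1,Y_1)=(x,y)$ gives $\big(-y(2x^3+y^3), x(x^3+2y^3)\big)$ over $x^3-y^3$, a solution of degree exactly $4$ since $x^3-y^3$ shares no factor with $x(x^3+2y^3)$ (their common roots would force $x=0$ or $x^3=y^3$, incompatible with $x^3=-2y^3$). Then I would argue that $d(2Q) = 4\,d(Q)$ whenever no cancellation occurs in the doubling formula, and that cancellation is obstructed because a common irreducible factor $\pi$ of the numerator and denominator of $2Q$ would, by the relation $p^3+q^3=(x^3+y^3)r^3$ applied to $2Q$, be forced to divide all three forms, contradicting the coprimality established in the Introduction. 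Hence $d(2^k P)$ is strictly increasing, so $P$ has infinite order.

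The main obstacle is controlling the cancellation in the composition/doubling formulas: a priori $d(v\circ v')$ could be strictly less than $d(v)d(v')$ if common factors appear in \eqref{E:circforms}, and the paper explicitly defers the general ``no cancellation'' statement to the later machinery. So I cannot simply quote $d(v\circ v')=d(v)d(v')$ here; I must instead verify directly (at least for the specific forms $v_4$ or the doubling of $P$) that the relevant numerator and denominator forms are coprime, using the same $\pi^3 \mid (x^3+y^3)r^3 \Rightarrow \pi \mid r$ type of argument from the Introduction. Once that single explicit coprimality check is in hand, the degree growth — and hence the infinitude of the order of $P$ — follows immediately.
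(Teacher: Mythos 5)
Your route is genuinely different from the paper's, and it can be made to work, but the one step you yourself flag as ``the main obstacle'' is also the one place where your justification, as written, is circular. The paper's proof is a short specialization argument: setting $(x,y)=(2,1)$ gives a group homomorphism from $E$ to the elliptic curve $E'\colon p^3+q^3=9r^3$ over $\qq$, sending $(x:y:1)$ to $(2:1:1)$; a standard descent computation shows $E'(\qq)\cong\zz$ with generator $(2:1:1)$, so $P$ has infinite order because its homomorphic image does. Your argument instead tracks the degrees of $2^kP$; it is more elementary and self-contained (no Mordell--Weil computation over $\qq$), but it stands or falls on the no-cancellation claim, and ``a common irreducible factor of the numerator and denominator of $2Q$ would divide all three forms, contradicting the coprimality established in the Introduction'' does not do the job: the coprimality in the Introduction is a normalization of an already reduced solution, whereas the three forms produced by the doubling formula are precisely the unreduced data whose coprimality you are trying to establish.

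The check you defer does go through, and here is the missing computation. Write $Q=(p:q:r)$ in lowest terms with $d=\deg p\ge 1$. The doubling formula in \eqref{E:all} produces the representative
\begin{equation*}
2Q=\left(-q(2p^3+q^3)\ :\ p(p^3+2q^3)\ :\ r(p^3-q^3)\right)
\end{equation*}
of nominal degree $4d$; it applies because $p^3=q^3$ would force $p=\om^jq$, impossible for coprime non-constant forms, so the third coordinate is nonzero. If an irreducible $\pi$ divided the first two coordinates, then from $\pi\mid q(2p^3+q^3)$ and $\pi\mid p(p^3+2q^3)$ every case forces $\pi\mid p$ \emph{and} $\pi\mid q$: if $\pi\mid 2p^3+q^3$ and $\pi\mid p^3+2q^3$ then $\pi\mid 3p^3$ and $\pi\mid 3q^3$, while the mixed cases (e.g.\ $\pi\mid q$ and $\pi\mid p^3+2q^3$) give $\pi\mid p^3$ at once. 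This contradicts the coprimality of $p$ and $q$ for $Q$ --- not for $2Q$ --- so no reduction occurs, $d(2Q)=4\,d(Q)$ exactly, and $d(2^kP)=4^k$ yields infinitely many distinct multiples of $P$. With that lemma inserted your second route is complete. Your first route (computing $v_4\circ v_4$ once and asserting ``inductively $d(v_4^{\circ k})=4^k$'') does not justify the induction step and should be dropped in favor of the doubling argument.
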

\begin{proof}
Define the homomorphism $\phi : E \to E'$ by setting $x = 2$ and
$y = 1$. Thus
\[
  E' : p^{3} + q^{3} = 9 r^{3}.
\]
We have $\phi((x : y : 1)) = (2 : 1 : 1)$. Using standard 
techniques (e.g. Prop. VII.3.1(b) or Cor. VIII.7.2 in \cite{Silverman}),
 one can compute that $E'(\qq)$, the group of rational 
points on $E'$, is isomorphic to $\zz$, and is generated by $(2 : 
1 : 1)$. It follows that $P$ has infinite order, since a 
homomorphic image of $P$ also has infinite order.
\end{proof}

Let $\mathcal{V}_{\infty} = \{ P \in \mathcal{V} : \phi_{P}(0) = 
0 \}$ be the subgroup of points $P \in \mathcal{V}$ so that 
$\phi_{P}$ fixes the chosen point at infinity. Recall that any 
polynomial map $\phi : E \to E$ with $\phi(0) = 0$ is called an 
{\it isogeny}. Theorem III.4.8 of \cite{Silverman} implies that if 
$\phi$ is an isogeny, then $\phi(P + Q) = \phi(P) + \phi(Q)$.  
The set of all isogenies from $E$ to itself is denoted $\End(E)$ 
and is called the endomorphism ring of $E$. The two ring 
operations are addition (in the group law, so $(\phi_{1} + 
\phi_{2})(R) = \phi_{1}(R) + \phi_{2}(R)$), and function 
composition. Our approach to proving Theorem~\ref{mainthm} will 
be to define a ring structure on $\mathcal{V}_{\infty}$, and 
prove that $\mathcal{V}_{\infty} \cong \End(E)$, and
finally show that $\mathcal{V}_{\infty} = \mathcal{V}_{1} = \{
m h_{1} + n h_{2} : m, n \in \zz \}$.

\begin{lemma}
\label{sum}
For any two points $P, Q \in \mathcal{V}$, we have
\[
  \phi_{P + Q} = \phi_{P} + \phi_{Q}.
\]
\end{lemma}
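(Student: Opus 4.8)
The plan is to reduce the identity $\phi_{P+Q} = \phi_P + \phi_Q$ to a statement about the addition formulas \eqref{E:all}, exploiting the fact that $\phi_{(\cdot)}$ is essentially ``evaluation of the solution $(f,g)$ as a change of coordinates.'' First I would recall that for $P = (a : b : c)$, the morphism $\phi_P$ is exactly right-composition: $\phi_P(v) = v \circ P$ in the notation of \eqref{E:circforms}, and in affine terms $\phi_P((X,Y)) = (X(a/c, b/c), Y(a/c, b/c))$, i.e. the formal substitution $(x,y) \mapsto (a/c, b/c)$ applied to the rational functions $X$ and $Y$ defining a point of $E$. The key observation — which is precisely the content of Lemma~\ref{distrib} read in the other slot — is that this substitution commutes with the group law on $E$. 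Concretely, if $(X_1,Y_1) + (X_2,Y_2) = (Z,W)$ on $E$ via the formulas \eqref{E:all}, then substituting $(x,y) \mapsto (a/c,b/c)$ into every rational function in sight (the $X_i, Y_i, Z, W$ and the ambient $A = x^3+y^3$, which becomes $(a/c)^3 + (b/c)^3$) preserves the identity, because \eqref{E:all} is a system of polynomial identities in $x,y$ and the $X_i,Y_i$ that remains valid under any ring homomorphism of the coefficient field $\cc(x,y)$ into itself. The same holds for the doubling case and, after clearing denominators, for the cases involving points at infinity handled by \eqref{sumsym} and \eqref{E:infi}.

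The steps, in order, would be: (i) identify $\phi_P(v) = v \circ P$ and note $\phi_P$ is a morphism $E \to E$; (ii) observe that composition on the \emph{right} with a fixed $P$ is the formal substitution $(x,y) \mapsto (a(x,y)/c(x,y), b(x,y)/c(x,y))$, and that this substitution is a homomorphism of the field $\cc(x,y)$ fixing $\cc$; (iii) apply this homomorphism to the defining identities \eqref{E:all} (and to \eqref{E:double}, \eqref{sumsym}, \eqref{E:infi}) to conclude that for \emph{any} two points $u, w \in E$, $\phi_P(u + w) = \phi_P(u) + \phi_P(w)$ — i.e. $\phi_P$ is always additive, whether or not $\phi_P(0) = 0$; (iv) now use \eqref{identity}: evaluate $\phi_{P+Q}$ and $\phi_P + \phi_Q$ at the generic point $(x:y:1)$ to get $P+Q$ in both cases — but this only matches at one point, so instead argue as follows. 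Apply step (iii) to $\phi_P$: additivity of $\phi_P$ as a map $E \to E$ does not yet give the result, since it concerns $\phi_P$ acting on sums, not the dependence on the subscript. So the real argument is (iv$'$): compute directly. Write $v = (X,Y)$, a generic element of $E$. Then $\phi_{P+Q}(v) = v \circ (P+Q)$ and $(\phi_P + \phi_Q)(v) = (v \circ P) + (v \circ Q)$, where the outer $+$ is the group law on $E$. Both sides are rational functions of $x,y$. The identity to prove is $v \circ (P + Q) = v \circ P + v \circ Q$ for all $v \in \mathcal V$ — that is, \emph{left}-distributivity of $\circ$ over $+$, which is exactly Lemma~\ref{distrib} (applied with the roles: $v$ fixed on the left, $P, Q, P+Q$ the points whose sum-identity on $E$ we substitute into). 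Indeed Lemma~\ref{distrib} says $(v_1 + v_2) \circ w = v_1 \circ w + v_2 \circ w$; here I need the statement with the summation on the inside argument, so I should instead invoke the symmetric fact that composition with $v$ on the left is the formal substitution $(x,y) \mapsto (X(x,y), Y(x,y))$, and that substitution is preserved by the varying definitions of addition in \eqref{E:all} — which is precisely the mechanism of the proof of Lemma~\ref{distrib}, now applied to $\phi_P$ rather than to $v \mapsto v \circ w$.

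So the clean writeup is: (a) recall $\phi_P(v) = v \circ P$ and that by \eqref{E:circforms} this is the substitution $(x,y) \mapsto (p_P/r_P, q_P/r_P)$ where $P = (p_P : q_P : r_P)$; (b) this substitution, call it $\si_P: \cc(x,y) \to \cc(x,y)$, is a $\cc$-algebra homomorphism; (c) the group law on $E$ over the field $\cc(x,y)$ is given by the rational formulas \eqref{E:all}/\eqref{E:double} (plus the infinity cases), which are identities in $\cc(x,y)$; applying the homomorphism $\si_P$ to the identity ``$u + w = u + w$ computed via \eqref{E:all}'' for points $u, w$ on $E$ shows $\si_P$ carries the group law to the group law, i.e. $\phi_P(u+w) = \phi_P(u) + \phi_P(w)$; (d) now specialize $u = P$ wait — rather, to get the subscript-additivity, take any $v \in \mathcal V$ and apply $\si_v$ (substitution $(x,y) \mapsto (f,g)$ where $v = (f,g)$) to the identity $P + Q$ computed on $E$: this gives $v \circ (P+Q) = (v \circ P) + (v \circ Q)$, i.e. $\phi_{P+Q}(v) = (\phi_P + \phi_Q)(v)$, for all $v$, hence $\phi_{P+Q} = \phi_P + \phi_Q$.

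The main obstacle is purely bookkeeping: verifying that the substitution $\si_v$ genuinely respects \emph{every} branch of the piecewise definition of addition in \eqref{E:all} — the generic chord case, the doubling case, and the three exceptional cases that land at infinity \eqref{sumsym}, \eqref{E:infi}. For the generic and doubling cases this is immediate since those are honest rational-function identities. The delicate point is the exceptional cases: one must check that $\si_v$ sends a pair summing to a point at infinity to another such pair — equivalently, that $\si_v$ maps the three points at infinity among themselves — which is exactly the ``second basic property'' already noted after \eqref{identity}, namely that every $\phi_P$ permutes $\{(1:-1:0),(1:-\om:0),(1:-\om^2:0)\}$. Granting that, no cancellation or degeneration can occur, and the homomorphism property of $\si_v$ transports the full group law. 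I would also remark that this lemma, combined with \eqref{identity}, immediately yields $\mathcal V_\infty$ being a subgroup and sets up the ring isomorphism $\mathcal V_\infty \cong \End(E)$ in the next step.
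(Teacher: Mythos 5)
Your proposal is essentially correct, but it follows a genuinely different route from the paper's. The paper argues by rigidity: from \eqref{identity}, the morphism $\phi_{P+Q}-\phi_{P}-\phi_{Q}$ sends $(x:y:1)$ to $0$; subtracting the constant $S=\phi_{P+Q}(0)-\phi_{P}(0)-\phi_{Q}(0)$ yields an isogeny $F$ that kills every multiple $[3n](x:y:1)$ (since each $\phi$ permutes the points at infinity, $[3]S=0$), so by Lemma~\ref{inforder} the kernel of $F$ is infinite, forcing $F=0$ and then $S=0$. You instead transport the group-law formulas \eqref{E:all} through the substitution homomorphism $\si_{v}:(x,y)\mapsto(f,g)$, which is exactly the mechanism of the paper's proof of Lemma~\ref{distrib}; indeed, $\phi_{P+Q}=\phi_{P}+\phi_{Q}$ evaluated on $\mathcal V$ \emph{is} Lemma~\ref{distrib} with the sum in the left slot of $\circ$. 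This works and is more elementary, but it carries two costs that the paper's argument avoids: (i) the case analysis over every branch of the addition law (chord, tangent, and the three exceptional configurations landing at infinity) --- here the clean way to rule out new degeneracies is to observe that $\si_{v}$ is an injective field map, so nonzero denominators stay nonzero and distinct points stay distinct, a point your appeal to the permutation of the points at infinity only partially covers; and (ii) the promotion of the pointwise identity to an identity of morphisms $E\to E$, which needs either that the same substitution argument applies to arbitrary points of $E$ over $\overline{\cc(x,y)}$ or that $\mathcal V$ is infinite (hence Zariski dense in $E$) by Lemma~\ref{inforder}. One correction: by \eqref{E:circforms} and the definition of $\phi_{P}$, one has $\phi_{P}(v)=P\circ v$ (substitute the coordinates of $v$ into those of $P$), not $v\circ P$ as written in your steps (a) and (d); the operation you actually carry out --- applying $\si_{v}$ to the coordinates of $P$, $Q$, and $P+Q$ --- is the correct one, so only the notation, not the argument, needs repair.
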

\begin{proof}
From \eqref{identity}, we have
\begin{align*}
  \phi_{P+Q}(x : y : 1) &= P + Q\\
  &= \phi_{P}(x : y : 1) + \phi_{Q}(x : y : 1).
\end{align*}
Thus, the point $(x : y : 1)$ is sent to $0$ under the map 
$\phi_{P + Q} - \phi_{P} - \phi_{Q}$. If $S = \phi_{P+Q}(0) - 
\phi_{P}(0) - \phi_{Q}(0)$, then $F = \phi_{P+Q} - \phi_{P} - 
\phi_{Q} - S$ is a morphism from $E$ to itself that fixes $0$. 
Thus, $F$ is an isogeny. Any morphism between two curves is 
either constant, or each point has finitely many preimages. It 
follows that $\ker F$ is either finite, or all of $E$. Since $F$ 
is an isogeny,
\[
  F([3n] (x : y : 1)) = [3n] F((x : y : 1)) = [3n](-S) = [n] ([3] (-S)) = 0. 
\]
Here, and in the rest of the section, $[m](p:q:r)$ is used instead of
$m(p:q:r)$ for clarity. 
By Lemma~\ref{inforder}, $(x : y : 1)$ has infinite order,
and hence the kernel of $F$ is infinite. This implies
that $F$ is the zero map, and so
\[
  \phi_{P+Q}(R) - \phi_{P}(R) - \phi_{Q}(R) = S.
\]
for any $R$. Setting $R = (x : y : 1)$ we see that $S = 0$, and
$\phi_{P+Q} = \phi_{P} + \phi_{Q}$.
\end{proof}

Recall that $h_{0} = (1 : -\omega : 0) \in \mathcal{V}$ and 
$2h_{0} = (1 : -\omega^{2} : 0)$. Clearly
\[
  \phi_{h_{0}}(R) = h_0, \qquad \phi_{2h_{0}}(R) = 2h_0
\]
for all $R \in \mathcal{V}$. It follows that for any point $P \in 
\mathcal{V}$, either $P$, $P - h_{0}$ or $P - 2h_{0} \in 
\mathcal{V}_{\infty}$. Hence,
\[
  \mathcal{V} \cong \mathcal{V}_{\infty} \times \langle h_{0} \rangle.
\]

\begin{lemma}
\label{ringstruct}
The subgroup $\mathcal{V}_{\infty} \subseteq \mathcal{V}$ can be given
the structure of a ring by defining $P \cdot Q = \phi_{P}(Q)$.
\end{lemma}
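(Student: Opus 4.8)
The plan is to check the ring axioms one at a time, leaning on two structural facts. The first is a composition identity: for \emph{any} $P, Q \in \mathcal{V}$ one has $\phi_{P} \circ \phi_{Q} = \phi_{P \cdot Q}$, where $P \cdot Q := \phi_{P}(Q)$; this is nothing but the associativity of polynomial substitution, read off directly from the defining formula for $\phi$ (and it identifies $P \cdot Q$ with the composition $P \circ Q$ of \eqref{E:circforms}). The second is that for $P \in \mathcal{V}_{\infty}$ the morphism $\phi_{P}$ is an isogeny, hence --- by Theorem III.4.8 of \cite{Silverman}, recalled above --- an additive map $E \to E$.

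First I would note that $\mathcal{V}_{\infty}$ is already an abelian group under point addition, being a subgroup of $\mathcal{V}$, so only the multiplicative structure needs attention. For closure of $\cdot$ on $\mathcal{V}_{\infty}$: if $P, Q \in \mathcal{V}_{\infty}$ then $\phi_{P}$ and $\phi_{Q}$ both fix $0$, so the composite $\phi_{P \cdot Q} = \phi_{P} \circ \phi_{Q}$ fixes $0$, which says precisely that $P \cdot Q \in \mathcal{V}_{\infty}$. Associativity is the same identity evaluated at a point: $(P \cdot Q) \cdot R = \phi_{P \cdot Q}(R) = \phi_{P}(\phi_{Q}(R)) = P \cdot (Q \cdot R)$.

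The two distributive laws split according to which factor varies. For $(P + Q) \cdot R = P \cdot R + Q \cdot R$ I would invoke Lemma~\ref{sum}, which gives $\phi_{P+Q} = \phi_{P} + \phi_{Q}$; evaluating at $R$ yields the claim (this one in fact holds throughout $\mathcal{V}$). For $P \cdot (Q + R) = P \cdot Q + P \cdot R$ I would use that $\phi_{P}$ is additive for $P \in \mathcal{V}_{\infty}$; this is the step for which membership in $\mathcal{V}_{\infty}$ is genuinely needed, since a map like $\phi_{h_{0}}$ (which is constant, not an isogeny) fails to be additive --- the very obstruction noted after Theorem~\ref{E:Ringo} that keeps all of $\mathcal{V}$ from being a ring. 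Finally, the identity element is $h_{1} = (x : y : 1)$: it lies in $\mathcal{V}_{\infty}$ because $\phi_{h_{1}}$ is the identity morphism, which gives $h_{1} \cdot P = P$, while $P \cdot h_{1} = \phi_{P}(x : y : 1) = P$ is exactly \eqref{identity}.

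I do not anticipate a real obstacle: the content is the composition identity $\phi_{P} \circ \phi_{Q} = \phi_{P \cdot Q}$ together with the observation that passing from $\mathcal{V}$ to $\mathcal{V}_{\infty}$ is precisely what promotes the $\phi_{P}$ to homomorphisms and thereby repairs distributivity. The only place to be careful is the bookkeeping in that identity --- in particular, tracking that in the third projective coordinate the scaling factors $c_{P}$ and $c_{Q}$ multiply correctly under substitution --- and making sure no representative-dependence creeps in when $\phi_{P}(Q)$ is formed.
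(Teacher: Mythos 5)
Your proof is correct, but the central step is handled by a genuinely different mechanism than in the paper. You establish the key identity $\phi_{P \cdot Q} = \phi_{P} \circ \phi_{Q}$ by direct computation: unwinding $\phi_{P}(p:q:r) = (a(p,q):b(p,q):c(p,q)r)$ and checking that polynomial substitution is associative, including the multiplicativity of the third projective coordinate. The paper instead proves the same identity by a rigidity argument: the two morphisms agree at $(x:y:1)$, so their difference is an isogeny whose kernel contains a point of infinite order (Lemma~\ref{inforder}) and is therefore the zero map. The paper's route sidesteps precisely the bookkeeping you flag at the end (well-definedness of $\phi_{S}$ under rescaling of the representative, and the scaling factors in the third coordinate), and it reuses the device already employed in Lemma~\ref{sum} and again in Lemma~\ref{iso}, making the three proofs uniform; your route is more elementary, makes it transparent that the identity holds on all of $\mathcal{V}$ and not just $\mathcal{V}_{\infty}$, and identifies $P \cdot Q$ with the composition $P \circ Q$ of \eqref{E:circforms} --- a fact the paper asserts elsewhere but does not fold into this lemma. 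You also verify closure of $\cdot$ on $\mathcal{V}_{\infty}$, the two-sided identity $h_{1}$, and the right-distributive law $(P+Q)\cdot R$ explicitly, whereas the paper leaves the first two implicit and writes out only $P \cdot (Q+R)$. Your diagnosis that additivity of $\phi_{P}$ (i.e., $P \in \mathcal{V}_{\infty}$, via Theorem III.4.8 of \cite{Silverman}) is the one step where membership in $\mathcal{V}_{\infty}$ is genuinely needed agrees exactly with the paper's proof and with its remark that constant maps such as $\phi_{h_{0}}$ obstruct a ring structure on all of $\mathcal{V}$.
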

\begin{proof}
We know that $\mathcal{V}_{\infty}$ is an abelian group. We must show that the 
multiplication operator is associative and distributive. By \eqref{identity},
\[
  \phi_{P}(x : y : 1) = P,
\]
we have
\[
  P \cdot Q = \phi_{P}(Q) = \phi_{P}(\phi_{Q}(x : y : 1)).
\]
Since $\phi_{S}(x : y : 1) = S$ for any $S \in \mathcal{V}$, it follows that
$\phi_{P \cdot Q}(x : y : 1) = P \cdot Q = \phi_{P}(\phi_{Q}(x : y : 1))$.
Thus, $(x : y : 1)$ is in the kernel of the isogeny
$\phi_{P \cdot Q} - \phi_{P} \circ \phi_{Q}$. By Lemma~\ref{inforder},
the kernel is therefore infinite and hence $\phi_{P \cdot Q} = \phi_{P}
\circ \phi_{Q}$. The associativity then follows from the fact
that function composition is associative. To prove the distributive law,
we use that $\phi_{P}$ is an isogeny and hence
\begin{align*}
  P \cdot (Q + R) &= \phi_{P}(Q + R)\\
  &= \phi_{P}(Q) + \phi_{P}(R)\\
  &= (P \cdot Q) + (P \cdot R).
\end{align*}
Thus, $\mathcal{V}_{\infty}$ naturally has the structure of a ring.
\end{proof}

\begin{lemma}
\label{iso}
The map $\tau : \mathcal{V}_{\infty} \to \End(E)$ given by
\[
  \tau(P) = \phi_{P}
\]
is an isomorphism of rings. 
\end{lemma}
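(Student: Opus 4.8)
The plan is to show that $\tau$ is a ring homomorphism, then that it is injective, then that it is surjective. That $\tau$ is additive is exactly Lemma~\ref{sum}: $\tau(P+Q) = \phi_{P+Q} = \phi_P + \phi_Q = \tau(P) + \tau(Q)$. That it is multiplicative was established in the proof of Lemma~\ref{ringstruct}, where we saw $\phi_{P\cdot Q} = \phi_P \circ \phi_Q$ by the infinite-kernel argument; so $\tau(P\cdot Q) = \tau(P)\circ\tau(Q)$. It also sends the multiplicative identity to the identity: the identity of $\mathcal V_\infty$ is $h_1 = (x:y:1)$ (by Lemma~\ref{simplecirc}, $h_1\circ v = v$), and $\phi_{h_1}$ is the identity morphism by \eqref{identity}. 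So $\tau$ is a unital ring homomorphism into $\End(E)$.

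For injectivity, suppose $\phi_P$ is the zero map of $\End(E)$, i.e. $\phi_P(R) = 0$ for all $R$. Evaluating at $R = (x:y:1)$ and using \eqref{identity} gives $P = \phi_P(x:y:1) = 0$. Hence $\ker\tau$ is trivial. (More generally this shows $\tau$ has the one-sided inverse $\psi \mapsto \psi(x:y:1)$ on the level of sets.)

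For surjectivity, let $\psi \in \End(E)$ be an arbitrary isogeny, and set $P := \psi(x:y:1) = (a(x,y):b(x,y):c(x,y))$ for forms $a,b,c$; since $\psi(0)=0$ and $\psi$ maps into $E$, the triple $(a:b:c)$ satisfies \eqref{titleequation} and fixes the chosen point at infinity, so $P \in \mathcal V_\infty$. I claim $\phi_P = \psi$. Indeed, both are isogenies (elements of $\End(E)$), and $\phi_P(x:y:1) = P = \psi(x:y:1)$ by \eqref{identity}; thus $\phi_P - \psi$ is a morphism $E\to E$ killing the point $(x:y:1)$ of infinite order, hence — running the same argument as in Lemma~\ref{sum}, namely that its kernel contains the infinite set $\{[3n](x:y:1)\}$ and a nonconstant morphism has finite fibres — it is the zero map. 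Therefore $\psi = \phi_P = \tau(P)$, proving surjectivity.

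The main obstacle is the surjectivity step, and specifically verifying cleanly that an abstract isogeny $\psi$, which a priori is just a morphism of the projective curve $E$, is realized by substitution of the coordinate forms of $\psi(x:y:1)$ into $(p:q:r)$ — that is, that the functorial "plug in $P=(a:b:c)$" map $\phi_P$ really does agree with $\psi$ rather than merely agreeing at one point. This is handled by the infinite-kernel / rigidity argument already used twice (Lemmas~\ref{sum} and \ref{ringstruct}), leaning on Lemma~\ref{inforder}; the rest is bookkeeping. One should also note in passing that $\phi_P$ is genuinely a morphism of $E$ (no common factor can be forced in $(a(p,q):b(p,q):c(p,q)r)$), which follows since $P\in\mathcal V$ means $a,b,c$ are pairwise coprime and the factorization argument from the Introduction applies after substitution, or simply because a rational map between smooth projective curves is automatically a morphism.
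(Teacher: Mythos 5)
Your proof is correct and follows essentially the same route as the paper: additivity from Lemma~\ref{sum}, multiplicativity from the proof of Lemma~\ref{ringstruct}, injectivity by evaluating at $(x:y:1)$, and surjectivity via the rigidity (infinite-kernel) argument applied to $\psi - \phi_{P}$ with $P = \psi(x:y:1)$. The only cosmetic point is that the membership $P \in \mathcal{V}_{\infty}$ is cleanest to deduce \emph{after} establishing $\phi_{P} = \psi$ (since then $\phi_{P}(0) = \psi(0) = 0$), rather than from $\psi(0)=0$ alone as you state it.
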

\begin{proof}
Lemma~\ref{sum} implies that $\tau(P + Q) = \tau(P) + \tau(Q)$. 
In the proof of Lemma~\ref{ringstruct}, we showed that $\tau(P 
\cdot Q) = \tau(P) \circ \tau(Q)$. Thus, $\tau$ is a ring 
homomorphism. If $\tau(P) = 0$, then $\phi_{P} = 0$ and so 
$\phi_{P}((x : y : 1)) = P = 0$. Hence, $\tau$ is injective.

Conversely, if $\phi \in \End(E)$, and $P = \phi(x : y : 1)$, then
$\phi - \phi_{P}$ has $(x : y : 1)$ in its kernel. Thus, the
kernel of $\phi - \phi_{P}$ is infinite and hence $\phi = \phi_{P}$.
It follows that $\phi = \tau(P)$ and so $\tau$ is surjective. 
\end{proof}

A similar argument identifying the Mordell-Weil group of an elliptic
surface with the endomorphism ring was given by Frank de Zeeuw
in his master's thesis \cite{Zeeuw}.

Now, we will prove our main result.
\begin{proof}[Proof of Theorem~\ref{mainthm}]
In light of the fact that
\[
  \mathcal{V} \cong \mathcal{V}_{\infty} \times \langle T \rangle,
\]
and that $\mathcal{V}_{\infty}$ is isomorphic to $\End(E)$ by Lemma~\ref{iso}, it
suffices to determine $\End(E)$. Theorem VI.6.1(b) of
\cite{Silverman} states that if $E$ is an elliptic curve defined over
a field of characteristic zero, then $\End(E)$ is isomorphic to either
$\zz$ or an order in an imaginary quadratic field.  Observe that
$\End(E)$ contains the map defined by
\[
  \phi((p : q : r)) = (\omega p : \omega q : r).
\]
Hereafter we will refer to the map $\phi$ as $[\omega]$.
This map fixes $(1 : -1 : 0)$, satisfies $[\omega]^{3} = 1$,
and sends $(x : y : 1)$ to $(\omega x : \omega y : 1)$. It follows
that $\End(E) \cong \zz[\omega]$ and
\[
  \mathcal{V}_{\infty} = \mathcal{V}_{1} = 
\langle (x : y : 1), (\omega x : \omega y : 1)\rangle
  \cong \zz \times \zz.
\]

Now, we will prove that $d(m h_{1} + n h_{2} + t h_{0}) = m^{2} - 
mn + n^{2}$. It suffices to prove this with $t = 0$, since $m 
h_{1} + n h_{2}$ is an affiliate of $m h_{1} + n h_{2} + t h_0$. 

If $P := m h_{1} + n h_{2} \in \mathcal{V}_{\infty}$, it is easy to see 
that the degree of $P$ is the same as the degree of the map 
$\phi_{P} : E \to E$. In this case,
\begin{align*}
  \phi_{P}((x : y : 1)) &= m h_{1} + n h_{2}\\
  &= m (x : y: 1) + n (\omega x : \omega y : 1)\\
  &= [m + n \omega] (x : y : 1).
\end{align*}
Thus, $\phi_{P} = [m + n \omega]$. The ring $\End(E)$ is endowed 
with an involution $\hat{\cdot}$ that satisfies
\begin{align*}
  \widehat{\lambda + \phi} &= \hat{\lambda} + \hat{\phi}\\
  \widehat{\lambda \circ \phi} &= \hat{\phi} \circ \hat{\lambda}\\
  \phi \circ \hat{\phi} &= [\deg\phi]
\end{align*}
(see Theorem III.6.2 of \cite{Silverman}). This, together with
the fact that $\deg([m]) = m^{2}$ implies that
\[
  \hat{[\omega]} = [\omega^{2}].
\]
This implies that
\[
  \widehat{[m + n \omega]} = [m + n \omega^{2}]
\]
and so
\begin{align*}
  [\deg([m + n \omega])] &= [m + n \omega] [m + n \omega^{2}]\\
  &= [m^{2} + (mn \omega + mn \omega^{2}) + n^{2}]\\
  &= [m^{2} - mn + n^{2}].
\end{align*}
Since the degree of $P$ equals $\deg \phi_{P} = \deg [m + n 
\omega]$, we have that the degree of $P$ is $m^{2} - mn + n^{2}$, 
as desired.
\end{proof}

\section{Related results and open questions}
\label{more}

We conclude with a brief discussion of some related
Diophantine equations. It is classically known that if $F(x,y)$ is a
binary cubic form, then after an invertible linear transformation in $(x,y)$,
$F(x,y)$ has one of the following three shapes: $x^3, x^3+y^3,
x^2y$. It is natural to wonder whether there are solutions to
\eqref{titleequation} in the other two cases.
\begin{theorem}
The equations
\begin{equation}\label{E:x3}
 p^3(x,y) + q^3(x,y) = x^3r^3(x,y),
\end{equation}
\begin{equation}\label{E:x2y}
 p^3(x,y) + q^3(x,y) = x^2y\ r^3(x,y),
\end{equation}
have no non-trivial solutions in forms $p, q, r \in \mathbb C[x,y]$.
\end{theorem}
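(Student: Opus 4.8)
The plan is to treat each equation as defining a curve and to rule out non-constant parametrizations by a genus/degree argument, exactly as the main theorem does implicitly for $x^3+y^3$. For \eqref{E:x3}, observe that a non-trivial solution $(p:q:r)$ gives, after dehomogenizing, a non-constant rational map from $\pp^1$ to the projective curve $P^3+Q^3 = X^3$ in $\pp^2$. But $P^3+Q^3=X^3$ is (a model of) a smooth plane cubic, i.e.\ an elliptic curve of genus $1$; by Fermat's theorem for exponent $3$ — or more to the point, by the fact that there is no non-constant morphism $\pp^1 \to$ (genus-$1$ curve) since such a morphism would have to be constant on the level of differentials — the only maps $\pp^1 \to \{P^3+Q^3=X^3\}$ are constant. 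A constant map forces $p,q,r$ proportional to constants, hence (by homogeneity and the degree convention $\deg p = \deg r - 1$) forces $r$ to have degree $-1$, impossible unless the solution is trivial. So \eqref{E:x3} has no non-trivial solution.

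For \eqref{E:x2y}, the right-hand side is $x^2 y$ rather than a perfect cube, so the curve $P^3 + Q^3 = X^2 Y$ in the relevant weighted/projective sense is not immediately an elliptic curve, and I would instead argue directly. The cleanest route: suppose $p,q,r$ form a non-trivial solution, with no two sharing a common factor (as in the discussion following \eqref{E:def}, the same factorization argument applies: if $\pi \mid p,q$ then $\pi^3 \mid x^2 y\, r^3$, forcing $\pi \mid r$, etc.). Factor $p^3 + q^3 = (p+q)(p+\omega q)(p+\omega^2 q)$. The three factors $p+q$, $p + \omega q$, $p+\omega^2 q$ are pairwise coprime (a common factor of two of them divides both $p$ and $q$), and their product is $x^2 y\, r^3$. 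Since $\cc[x,y]$ is a UFD and $x, y$ are the relevant irreducible factors on the right, unique factorization forces each of $p+q$, $p+\omega q$, $p+\omega^2 q$ to be (a unit times) $x^{a_i} y^{b_i} r_i^3$ with $\sum a_i = 2$, $\sum b_i = 1$, and $r = r_1 r_2 r_3$ up to units. Because the exponent of $y$ in the product is $1$, exactly one $b_i$ equals $1$ and the other two vanish; because the exponent of $x$ is $2$, the multiset $\{a_1,a_2,a_3\}$ is $\{2,0,0\}$ or $\{1,1,0\}$. In every case at least one of the three linear forms $p + \omega^j q$ is a perfect cube times at most one linear monomial, and the coprimality plus the syzygy $(p+q) - (1+\omega)(p+\omega q) + \omega(p+\omega^2 q) = 0$ (which expresses one factor linearly in the other two) then forces a contradiction on degrees: three pairwise-coprime cubes cannot satisfy a non-trivial linear relation over $\cc$ unless two of them are constant, by the Mason--Stothers (polynomial $abc$) theorem.

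Concretely, I would invoke Mason--Stothers directly as the engine: if $u^3 + v^3 + w^3 = 0$ with $u,v,w \in \cc[x,y]$ (here one-variable after dehomogenizing) pairwise coprime and not all constant, then $\max(\deg u^3,\deg v^3,\deg w^3) \le \deg\mathrm{rad}(u^3v^3w^3) - 1 \le \deg u + \deg v + \deg w - 1$, which is absurd for positive degrees. Applying this with $\{u,v,w\}$ the three factors $p+\omega^j q$ divided by their monomial parts — after checking the monomial parts are themselves forced to be cubes by the exponent count above, perhaps after permuting $x \leftrightarrow y$ or absorbing — yields the claim. The main obstacle, and the step I'd expect to need the most care, is the bookkeeping in \eqref{E:x2y}: the exponent of $x$ being $2$ (not a multiple of $3$) means the monomial factors $x^{a_i}$ cannot all be cubes, so one must track how the ``leftover'' $x$'s distribute and confirm that in each distribution pattern the three adjusted factors are still pairwise coprime polynomials of positive degree satisfying a linear relation, so that Mason--Stothers applies and kills it. The case \eqref{E:x3} is by contrast essentially immediate once the genus-$1$ observation is made.
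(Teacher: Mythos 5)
Your treatment of \eqref{E:x3} is correct and is essentially the paper's argument: the paper dehomogenizes and invokes Liouville's theorem that the Fermat cubic has no non-constant solutions over $\cc[t]$, which is exactly your genus-$1$ observation. (One small slip: a constant map to the Fermat cubic makes $p$, $q$, $xr$ proportional to a common \emph{form}, not to constants; the conclusion that such solutions are trivial still stands, but not via ``$\deg r = -1$''.)

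For \eqref{E:x2y}, however, your engine fails at the decisive moment. The factorization into the pairwise coprime $p+\omega^j q$ and the bookkeeping of where $x^2y$ can go are fine (your case $\{a_i\}=\{1,1,0\}$ is already impossible by the coprimality you noted, and the pattern where $x^2$ and $y$ land in \emph{different} factors is excluded because the three factors have equal degree, forcing the monomial degrees to be congruent mod $3$, hence $(1,1,1)$ or $(3,0,0)$). The surviving case is, say, $p+q = x^2y\,s_0^3$, $p+\omega q = u_1 s_1^3$, $p+\omega^2 q = u_2 s_2^3$ with the syzygy $(p+q)+\omega(p+\omega q)+\omega^2(p+\omega^2 q)=0$. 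Here Mason--Stothers gives \emph{no contradiction}: all three terms have degree $D=\deg r+1$, while the radical of their product has degree at most $2+\deg s_0+\deg s_1+\deg s_2 = 2+\deg r = D+1$, so the inequality reads $D\le D$. (The same tightness occurs if you apply the polynomial $abc$ theorem to $p^3+q^3=x^2y\,r^3$ directly: $3\deg r+3 \le 3\deg r+3$.) The underlying reason is that the ``cube deficiency'' of the right-hand monomial --- its degree minus the degree of its radical --- is $2$ for $x^3$ (contradiction), $0$ for $x^3+y^3$ (solutions exist), and exactly $1$ for $x^2y$, which precisely matches the slack in Mason--Stothers. Moreover ``dividing out the monomial parts'' is not available: only one of the three terms carries $x^2y$, and dividing a sum-to-zero relation by a factor of a single term destroys the relation.

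So an additional idea is genuinely needed, and this is where the paper's proof diverges: it runs an infinite descent. In the surviving case the syzygy reads $x^2y\,s_0^3 = -\omega s_1^3 - \omega^2 s_2^3$, which (after absorbing cube roots of the unit constants) is itself a non-trivial solution of \eqref{E:x2y} with $\deg s_0 = (\deg r - 2)/3 < \deg r$, contradicting minimality of the chosen solution. An alternative repair that stays closer to your plan: substitute $(x,y)\mapsto(x^3,y^3)$, so that \eqref{E:x2y} becomes $a^3(x^3,y^3)+b^3(x^3,y^3) = \bigl(x^2y\,c(x^3,y^3)\bigr)^3$, a genuine Fermat cubic to which your Liouville/Mason--Stothers argument does apply; one then checks that proportionality of the three cubes is incompatible with the exponent residues mod $3$. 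As written, though, the proof of \eqref{E:x2y} has a real gap.
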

\begin{proof}
Any solution to \eqref{E:x3} would be a solution to the Fermat
equation $X^n+Y^n=Z^n$ for $n=3$ over $\mathbb C[t]$, upon setting
$(x,y) = (1,t)$. The non-existence of such non-constant solutions was proved by
Liouville in 1879. (See the exposition in \cite[pp.263-265]{Rib}.)

Assume \eqref{E:x2y} has a solution and 
rewrite as 
\begin{equation}\label{E:factor}
x^2 y\ r^3 = (p+q)(p+\om q)(p+\om^2 q).
\end{equation} 
Let $\mathcal F = \{p+\om^j q: j=0,1,2\}$.
Note that $\mathcal F$ is linearly dependent: $\sum
\om^j(p+\om^jq) = 0$, hence any polynomial that divides two elements
of $\mathcal F$ divides
the third, and also divides $p$ and $q$.
Let $(p_0,q_0,r_0)$ be a solution of \eqref{E:factor} in which $d =
\deg r_0$ is minimal. If $d=0$, then $p_0$ and $q_0$ must be linear and the
product of the elements in $\mathcal F$ is $x^2y$, hence 
 $x$ must divide two of them, and so $x | p_0,q_0$, a contradiction.
Now suppose $d \ge 1$ and suppose $\pi$ is an
irreducible factor of $r_0$. If $\pi$ divides two elements of $\mathcal F$,
then, as before, $\pi$ divides $p,q$ and  $(p_0 : \pi,q_0/\pi : r_0/\pi)$ is a
solution to \eqref{E:x2y} of lower degree. It follows that if  $\pi^m$
is a factor of $r_0$, then $\pi^{3m}$ is concentrated in one member of
$\mathcal F$. We may thus write $r_0 = s_0s_1s_2$ so that $s_j^3 | p_0
+ \om^j q_0$. 
Since the degrees of $\{p_0 + \om^j q_0\}$ are equal, \eqref{E:factor}
implies that the three remaining factors, $\{x,x,y\}$, are either
dispersed, one to each $p_0 + \om^j q_0$, or combined in a single
factor. In the first case, we may again conclude that  $x | p_0, q_0$, and
\eqref{E:factor} implies that $x | r_0$, a contradiction. In the second
case, suppose without loss of generality that $x^2y | p_0 + q_0$. Then we
have $p_0+q_0 = x^2y\ \!  s_0^3$, $p_0 + \om q_0 = s_1^3$,
$p_0+\om^2q_0 = s_2^3$, and 
the linear dependence on the elements of $\mathcal F$ implies that
\begin{equation}
x^2ys_0^3 = -\om s_1^3 -\om^2 s_2^3,
\end{equation}
which, after the absorption of constants, is a solution to \eqref{E:x2y}.
If $\deg s_1 = d$, then $\deg s_2=d$, $\deg s_0 = d-1$ and $\deg r =
\deg s_0 + \deg s_1 + \deg s_2 = 3d-1 > d$, contradicting its supposed
minimality and completing the descent.
\end{proof}

We now show that Theorem 1.2(4,5) contains, in effect, the solution to two other
Diophantine equations. 
\begin{theorem}
Any solution in forms $a,b,c \in \cc[x,y]$ to either of the equations
\begin{equation}\label{E:xyx+y}
 a^3(x,y) + b^3(x,y) = xy(x+y)\ \! c^3(x,y),
\end{equation}
\begin{equation}\label{E:xy}
 x\ \! a^3(x,y) + y\ \! b^3(x,y) = (x+y)\ \!  c^3(x,y)
\end{equation}
can be directly derived from a solution to \eqref{titleequation}.
\end{theorem}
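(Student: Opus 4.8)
The plan is to reduce both equations to the main equation \eqref{titleequation} via the same cubic factorization trick used in the preceding theorem, substituting cubes of variables and then identifying the resulting system with a known solution from Theorem~\ref{conseqthm}(4,5). Concretely, for \eqref{E:xyx+y}, note that $xy(x+y)$ differs from $x^3y^3(x^3+y^3)$ only by the substitution $(x,y)\mapsto(x^3,y^3)$ up to an overall cube, so I would first show that any solution $(a,b,c)$ to \eqref{E:xyx+y} gives, upon the substitution $(x,y)\mapsto(x^3,y^3)$, the identity $a^3(x^3,y^3)+b^3(x^3,y^3) = x^3y^3(x^3+y^3)c^3(x^3,y^3) = (xy\,c(x^3,y^3))^3(x^3+y^3)/c(x^3,y^3)^{0}$ — more precisely, setting $p(x,y)=a(x^3,y^3)$, $q(x,y)=b(x^3,y^3)$, $r(x,y)=xy\,c(x^3,y^3)$ yields $p^3+q^3=(x^3+y^3)r^3$, a solution to \eqref{titleequation} of the special shape described in Theorem~\ref{conseqthm}(4). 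Conversely, Theorem~\ref{conseqthm}(4) says that \emph{every} solution to \eqref{titleequation} of degree divisible by $3$ already has this shape, so the correspondence $(a,b,c)\leftrightarrow(p,q,r)$ is a bijection between solutions of \eqref{E:xyx+y} and solutions of \eqref{titleequation} with $3\mid d(v)$. This is exactly the content of ``can be directly derived from a solution to \eqref{titleequation}.''

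For \eqref{E:xy}, the idea is parallel but uses Theorem~\ref{conseqthm}(5) instead. Here I would observe that a solution $(a,b,c)$ to $x\,a^3+y\,b^3=(x+y)c^3$ transforms, under $(x,y)\mapsto(x^3,y^3)$, into $x^3a^3(x^3,y^3)+y^3b^3(x^3,y^3)=(x^3+y^3)c^3(x^3,y^3)$, i.e.\ $p^3+q^3=(x^3+y^3)r^3$ with $p(x,y)=x\,a(x^3,y^3)$, $q(x,y)=y\,b(x^3,y^3)$, $r(x,y)=c(x^3,y^3)$ — precisely the shape of a degree-$\equiv 1\pmod 3$ solution in Theorem~\ref{conseqthm}(5). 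Again the converse direction is furnished by Theorem~\ref{conseqthm}(5): every solution of \eqref{titleequation} with $d(v)\equiv 1\pmod 3$, after a possible swap of $p$ and $q$, has this form, so solutions of \eqref{E:xy} are in bijection with (unordered) solutions of \eqref{titleequation} of degree $\equiv 1\pmod 3$. I would also note in passing, using Theorem~\ref{conseqthm}(6), that there are no solutions of degree $\equiv 2\pmod 3$, which explains why only these two residue classes give auxiliary equations and why the $x^3$ and $x^2y$ cases (covered by the previous theorem) genuinely have no solutions.

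The main subtlety — and the only place requiring care rather than bookkeeping — is verifying that the substitution $(x,y)\mapsto(x^3,y^3)$ is genuinely invertible at the level of \emph{forms}: given $p,q,r$ of the special shape guaranteed by Theorem~\ref{conseqthm}(4) or (5), one must check that the forms $P,Q,R$ with $p(x,y)=P(x^3,y^3)$ etc.\ (respectively $p(x,y)=xP(x^3,y^3)$ etc.) are uniquely determined and have the right degrees, so that $a:=P$, $b:=Q$, $c:=R$ (resp.\ $a:=P$, $b:=Q$, $c:=R$) really is a bona fide solution of the auxiliary equation — in particular that the factor $xy$ (resp.\ the factors $x,y$) can legitimately be stripped off and reincorporated. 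This is routine because a monomial $x^{3i}y^{3j}$ in $p$ corresponds bijectively to the monomial $x^iy^j$ in $P$; the degree counts match since $\deg p=\deg q=\deg r-1$ translates to $\deg a=\deg b=\deg c$ in the first case and $\deg a=\deg b$, $\deg c=\deg a - 1 + 1$ adjusted appropriately in the second. I would spell out one of these degree computations explicitly and then state that the other equation is handled identically, mutatis mutandis.

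One should also record the trivial-solution caveat: on the \eqref{E:xyx+y} side the ``trivial'' solutions (those with $c=0$, forcing $a^3=-b^3$) correspond to the points at infinity of $\mathcal V$, and on the \eqref{E:xy} side the degenerate cases likewise match up, so the bijection is between honest non-trivial solutions on both ends. I do not expect any real obstacle here; the entire proof is an application of the structural description already obtained, and the write-up should be short.
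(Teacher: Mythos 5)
Your proposal is correct and follows essentially the same route as the paper: substitute $(x,y)\mapsto(x^3,y^3)$, recognize the resulting triple $(a(x^3,y^3):b(x^3,y^3):xy\,c(x^3,y^3))$ (respectively $(xa(x^3,y^3):yb(x^3,y^3):c(x^3,y^3))$) as an element of $\mathcal V$ of the special shape in Theorem~\ref{conseqthm}(4) (respectively (5)), and identify $(a,b,c)$ with $(P,Q,R)$. Your additional remarks on the converse bijection and the degree bookkeeping are correct elaborations of what the paper leaves implicit.
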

\begin{proof}
If \eqref{E:xyx+y}
holds, then by taking $(x,y) \mapsto (x^3,y^3)$, we see that
 \begin{equation}
a^3(x^3,y^3) + b^3(x^3,y^3) = x^3y^3(x^3+y^3)c^3(x^3,y^3),
\end{equation}
hence $(a(x^3,y^3) : b(x^3,y^3) : xy\ c(x^3,y^3)) \in \mathcal V$, and
$\deg(a(x^3,y^3)) = 3d'$. In the language of Theorem 1.2(4), we have
$(a,b,c) = (P,Q,R)$; again, compare with \eqref{E:lucas}.

Similarly, suppose \eqref{E:xy} holds; take $(x,y) \mapsto (x^3,y^3)$ to
obtain
\begin{equation}
 x^3\ \! a^3(x^3,y^3) +  y^3\ \! b^3(x^3,y^3) = (x^3+y^3)c^3(x^3,y^3).
\end{equation}   
Thus $(xa(x^3,y^3) : yb(x^3,y^3) : c(x^3y^3)) \in \mathcal V$ and 
$\deg (xa(x^3,y^3)) = 3d'+1$, so that in the language of Theorem 1.2(5),
we have $(a,b,c) = (P,Q,R)$. 
\end{proof}

The subject of equal sums of two cubes has a very long history. For
example, the {\it Euler-Binet} formulas (see e.g. \cite[\S 13.7]{HW})
give a complete parameterization to the equation
\begin{equation}
X^3 + Y^3 = U^3 + V^3
\end{equation}
over $\mathbb Q$, although an examination of the proof in \cite{HW} 
shows that it also
applies to any field $F$ of characteristic zero, such as $\mathbb
C(x,y)$. The parameterization is:
\begin{equation}
\begin{gathered}
X = \la(1 - (a-3b)(a^2+3b^2), \qquad Y = \la((a+3b)(a^2+3b^2)-1), \\
U = \la( (a+3b) - (a^2+3b^2)^2), \qquad V = \la((a^2+3b^2)^2 -(a-3b)),
\end{gathered}
\end{equation}
where $a, b, \la \in F$. One can easily solve for $(a,b,\la)$ for which
$X=x, Y=y, U=f, V=g$, although the derivation assumes that
$f^3+g^3=x^3+y^3$, so it is unhelpful in finding solutions to
\eqref{E:mainequation}. Further, these solutions do not necessarily come
from simple choices of $(a,b,\la)$. For example, in the case of
\eqref{viete}, a computation shows that $(X,Y,U,V) = (x,y,f_4,g_4)$
arises (uniquely) from
\begin{equation}
a = \frac{2x^2+5xy+2y^2}{2(x^2+xy+y^2)},\quad
 b = -\frac{3xy(x+y)}{2(x^3-y^3)},\quad 
\la = - \frac{(x-y)^3}{9xy}. 
\end{equation}
If $(g_4,f_4)$ is taken instead of $(f_4,g_4)$, then $a$ is a
quotient of two quartics, $b$ is a quotient of two quintics and $\la$
is a quintic divided by a quartic.

Finally, we look at some more general sums of two cubes. If $h,k, F
\in \mathbb C(x,y)$, $h^3 + k^3 = F$, $w = (h,k)$  and $v = (f,g) \in
\mathcal V$, 
then there is (at least) a one-sided composition on all solutions to $X^3+Y^3 =
F$, given by $v \circ w = (f(h,k),g(h,k))$. This follows from 
\begin{equation}\label{E:so2c}
f^3(h,k) + g^3(h,k) = h^3+k^3 = F.
\end{equation}
For example, with $F(x,y) = 2x^6 - 2y^6$ and $\ga = 2^{1/3}$,
\begin{equation}\label{E:last}
(x^2 + x y - y^2)^3 + (x^2 - x y - y^2)^3 = (\ga x^2)^3 + (- \ga y^2)^3
= 2x^6 - 2y^6.
\end{equation}
However, there is clearly no $v = (f,g) \in \mathcal V$ so that $x^2
+ xy - y^2 = f(\ga x^2,-\ga y^2)$. Moreover, there are other solutions to 
\begin{equation}\label{E:sextic}
a^3(x,y) + b^3(x,y) = (2x^6 - 2y^6)c^3(x,y).
\end{equation}  
For example, 
\begin{equation}
a_0(x,y) = x^3 + \tfrac i{\sqrt 3}y^3,\quad b_0(x,y) = x^3 -  \tfrac
i{\sqrt 3}y^3,\quad c_0(x,y) = x, 
\end{equation}
(and $(a_0(y,x),b_0(y,x),-c_0(y,x))$) do not arise from composition of
either solution of \eqref{E:last} with $\mathcal V$. We look forward
to finding the complete structure of the solutions to \eqref{E:sextic}.


\end{document}